\documentclass[11pt,a4paper]{article}
\usepackage[T1]{fontenc}
\usepackage{lmodern}
\usepackage[margin=1in]{geometry}
\usepackage{amsmath,amssymb,amsthm}
\usepackage{microtype}
\usepackage{tikz}
\usetikzlibrary{arrows.meta,positioning}
\usepackage[colorlinks=true,linkcolor=blue,citecolor=blue,urlcolor=blue]{hyperref}
\hypersetup{pdftitle={Sharp planar Turan bounds for quasi-double stars},pdfauthor={Zehui Shao, Enqiang Zhu, and Shaohui Wang},pdfkeywords={planar Turan number, quasi-double star, planar triangulation, extremal graph theory}}
\newtheorem{theorem}{Theorem}[section]
\newtheorem{lemma}[theorem]{Lemma}
\newtheorem{proposition}[theorem]{Proposition}
\newtheorem{corollary}[theorem]{Corollary}
\theoremstyle{remark}
\newtheorem{remark}[theorem]{Remark}
\newcommand{\ex}{\operatorname{ex}}
\newcommand{\PP}{\mathcal P}
\newcommand{\cdeg}{\operatorname{c}}
\newcommand{\dist}{\operatorname{dist}}
\title{Sharp planar Tur\'an bounds  for quasi-double stars}
\author{Zehui Shao$^{1}$ \quad Enqiang Zhu$^{1}$ \quad Shaohui Wang$^{2}$\\[0.3em]
\small $^{1}$Institute of Computing Science and Technology\\
\small Guangzhou University, Guangzhou 510006, China\\
\small $^{2}$Department of Mathematics, Louisiana Christian University\\
\small Pineville, LA 71359, USA\\
\small E-mail: \href{mailto:shaohui.wang@lcuniversity.edu}{\texttt{shaohui.wang@lcuniversity.edu}} (Shaohui Wang)}
\date{}
\begin{document}
\maketitle
\vspace{-1.5em}
\begin{abstract}
We study $W$-free planar graphs for $W\in\{W_{2,4},W_{2,5},W_{3,4}\}$, where the quasi-double star $W_{h,k}$ is obtained from a three-vertex path by attaching $h$ leaves to one endpoint and $k$ leaves to the other. We prove that every $W_{2,4}$-free planar graph on $n$ vertices has at most $9n/4$ edges, and the bound is attained whenever $8\mid n$. This determines the planar Tur\'an density of $W_{2,4}$ as $9/4$. We also establish the sharp upper bound $5n/2$ for $W_{2,5}$. Combined with known constructions of planar graphs of maximum degree five, it yields $\ex_{\PP}(n,W_{2,5})=\lfloor5n/2\rfloor$ for every $n\ge15$. These results close the two corresponding coefficient gaps in the bounds of Liu et~al. Our proofs use structural restrictions on high-degree vertices, local deletions, and degree deficits in neighborhoods of radius two. For $W_{3,4}$, we characterize the planar graphs with a dominating vertex that avoid this tree and determine their exact extremal number, $\lfloor(5n-7)/2\rfloor$, for every $n\ge10$. Finally, $W$-free planar triangulations have at most eight, twelve, and eleven vertices for $W=W_{2,4},W_{2,5},W_{3,4}$, respectively; the first two bounds are sharp.
\end{abstract}
\noindent\textbf{Keywords:} planar Tur\'an number; quasi-double star; planar triangulation; extremal graph theory.\\
\textbf{Mathematics Subject Classification:} 05C35, 05C10.

\section{Introduction}

All graphs are finite and simple. A graph is $H$-free if it contains no subgraph isomorphic to $H$, where containment is not required to be induced. The planar Tur\'an number $\ex_{\PP}(n,H)$ is the maximum number of edges in an $H$-free planar graph on $n$ vertices. Density conditions forcing a prescribed subgraph form a central theme of extremal graph theory; the classical path theorem of Erd\H{o}s and Gallai~\cite{ErdosGallai1959} is a foundational example for trees. The planar problem asks how a prescribed forbidden configuration reduces the edge density available under a planar embedding. Euler's inequality already imposes the universal bound $3n-6$ for $n\ge3$; the central issue is therefore to identify the optimal linear coefficient and, when possible, the exact extremal function and its attaining graphs. This makes planar Tur\'an problems a setting in which local subgraph restrictions must be reconciled with global constraints on the embedding.

Dowden~\cite{Dowden2016} developed this direction for forbidden $4$- and $5$-cycles, and Lan et~al.~\cite{LanShiSong2019} studied a wider range of forbidden graphs, including stars, wheels, and fans. Within the class of trees, Lan and Shi~\cite{LanShi2019} obtained exact results for short paths, while Ghosh et~al.~\cite{GhoshEtAl2022} studied planar Tur\'an bounds for double stars. Some interesting works are in~\cite{b1,b2,b3,b4,b5}. More recently, Xu and Shao~\cite{XuShao2025} determined the sharp coefficient $31/14$ for the double star $S_{2,4}$. Here $S_{h,k}$ denotes the tree obtained from an edge by attaching $h$ leaves to one endpoint and $k$ leaves to the other. These results motivate the study of how the length of the path between branching vertices and the distribution of leaves affect the extremal density.
For trees, degrees give a natural first obstruction: a planar graph of maximum degree less than $\Delta(T)$ is automatically $T$-free. Nevertheless, allowing a few vertices of larger degree need not force a copy of $T$, because the required leaf sets may overlap. Determining the extremal density thus requires more than a maximum-degree estimate; one must understand how the neighborhoods of high-degree vertices can intersect in a planar graph.

For positive integers $h,k$, let $W_{h,k}$ be the tree obtained from a path $xyz$ by adding $h$ leaves adjacent to $x$ and $k$ leaves adjacent to $z$ (Figure~\ref{fig:quasi-double-star}). Thus $W_{h,k}$ has $h+k+3$ vertices; we call $xyz$ its \emph{spine}. Equivalently, $W_{h,k}$ is obtained by subdividing the central edge of a double star once. This subdivision moves the two branching vertices to distance two. Consequently, the existence of a copy depends on the available disjoint leaf sets at two vertices with a common neighbor. The relevant neighborhood comparison is thus between vertices joined by a path of length two. Quasi-double stars therefore provide a natural family for studying the role of common neighborhoods in planar extremal problems.

\begin{figure}[htbp]
\centering
\begin{tikzpicture}[x=1cm,y=1cm,
  vertex/.style={circle,fill=black,inner sep=1.8pt},
  every label/.style={font=\small}]
  \coordinate (x) at (-1.35,0);
  \coordinate (y) at (0,0);
  \coordinate (z) at (1.35,0);
  \coordinate (u1) at (-3,1.05);
  \coordinate (u2) at (-3,0.35);
  \coordinate (uh) at (-3,-1.05);
  \coordinate (v1) at (3,1.05);
  \coordinate (v2) at (3,0.35);
  \coordinate (vk) at (3,-1.05);
  \draw[line width=.85pt] (x)--(y)--(z);
  \draw (x)--(u1) (x)--(u2) (x)--(uh)
        (z)--(v1) (z)--(v2) (z)--(vk);
  \node[vertex,label=below:$x$] at (x) {};
  \node[vertex,label=below:$y$] at (y) {};
  \node[vertex,label=below:$z$] at (z) {};
  \node[vertex,label=left:$u_1$] at (u1) {};
  \node[vertex,label=left:$u_2$] at (u2) {};
  \node[vertex,label=left:$u_h$] at (uh) {};
  \node[vertex,label=right:$v_1$] at (v1) {};
  \node[vertex,label=right:$v_2$] at (v2) {};
  \node[vertex,label=right:$v_k$] at (vk) {};
  \node at (-3,-.35) {$\vdots$};
  \node at (3,-.35) {$\vdots$};
\end{tikzpicture}
\caption{The quasi-double star $W_{h,k}$. The spine is $xyz$, with leaves $u_1,\ldots,u_h$ at $x$ and $v_1,\ldots,v_k$ at $z$.}
\label{fig:quasi-double-star}
\end{figure}

Liu et~al.~\cite{LiuXieZhao2026} studied these trees for $1\le h\le2\le k\le5$. For $W_{2,4}$, their results leave a gap between the lower coefficient $9/4$ and the upper coefficient $5/2$; for $W_{2,5}$, the corresponding coefficients are $5/2$ and $17/6$. Their lower bounds arise from disjoint unions of small triangulations. These gaps motivate the following question: can vertices of large degree be arranged to improve those constructions, or does planarity force enough vertices of smaller degree to make the lower coefficients optimal? We answer this question for both trees by proving matching upper bounds. For $W_{2,5}$, the bounded-degree constructions of Lan et~al.~\cite{LanShiSong2019} further turn the sharp coefficient into an exact formula for every $n\ge15$.

We also consider $W_{3,4}$, which has the same order as $W_{2,5}$ but a different distribution of leaves between its branching vertices. As a first step toward its unrestricted extremal problem, we study planar graphs with a dominating vertex. Deleting that vertex leaves an outerplanar graph, so this subclass allows the effect of a single high-degree vertex to be described exactly.

Our first main result determines the planar Tur\'an density of $W_{2,4}$ and gives exact values at every order divisible by eight.

\begin{theorem}\label{thm:w24}
For every positive integer $n$,
\[
\ex_{\PP}(n,W_{2,4})\le\left\lfloor\frac{9n}{4}\right\rfloor.
\]
For every positive integer $t$, equality holds at $n=8t$. Moreover,
\[
\lim_{n\to\infty}\frac{\ex_{\PP}(n,W_{2,4})}{n}=\frac94.
\]
\end{theorem}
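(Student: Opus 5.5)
The proof has two parts: a construction giving the lower bound at $n=8t$, and a discharging argument giving the upper bound.

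\emph{Lower bound.} Take $t$ disjoint copies of a planar triangulation on eight vertices, which has $3\cdot 8-6=18=9\cdot 8/4$ edges. Any copy of $W_{2,4}$ has $h+k+3=9$ vertices and is connected, so it cannot fit inside a component with eight vertices. Hence the disjoint union is $W_{2,4}$-free with $9n/4$ edges when $n=8t$. Once the upper bound is proved, this gives equality at $n=8t$. The limit statement then follows: for general $n$, take the construction on $8\lfloor n/8\rfloor$ vertices and add isolated vertices, which gives $\ex_{\PP}(n,W_{2,4})\ge 9n/4-O(1)$.

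\emph{Upper bound.} Fix a counterexample $G$ with $n$ minimal, so $e(G)>9n/4$. Minimality gives $\delta(G)\ge 3$: deleting a vertex of degree at most $2$ removes at most $2<9/4$ edges, so the smaller graph would still violate the bound. The key structural facts concern high-degree vertices.
\begin{itemize}
\item[(i)] If $x$ and $z$ have a common neighbour $y$, $d(z)\ge 5$ and $d(x)\ge 3$, a greedy choice usually yields two leaves at $x$ and four at $z$ avoiding $y$ and each other. The only obstruction is a heavy overlap of $N(x)$ and $N(z)$.
\item[(ii)] Consequently, a vertex $z$ of degree at least $7$ forces every vertex at distance exactly two from it (through some $y$) to have almost all of its neighbours inside $N(z)$. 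With planarity, this squeezes $N(x)\setminus\{y\}$ into at most two vertices of $N(z)$ adjacent to each other, essentially because $K_{3,3}$-type configurations are excluded.
\item[(iii)] Vertices of degree $5$ or $6$ with a degree-$\ge3$ vertex at distance two face similar restrictions.
\end{itemize}

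These facts feed a discharging argument with charge $d(v)-9/2$ per vertex. The total charge is $2e-9n/2>0$, so the goal is to redistribute charge until every vertex has nonnegative-or-zero surplus, giving a contradiction. Vertices of degree $3$ and $4$ have deficits $3/2$ and $1/2$; they must be paid by vertices of degree at least $5$ within radius two, matching the abstract's ``degree deficits in neighborhoods of radius two.'' Alternatively, Euler charge on faces, $2e\le 3n-6$ adjusted by faces of length at least $4$, may be cleaner.

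The main obstacle is handling \emph{dense clusters}: small near-triangulated pieces around a high-degree vertex, where the structural lemma says the radius-two neighbourhood is confined. For these I would use local deletion. Remove the cluster $S$ (a vertex $z$ together with its confined second neighbourhood) and verify $e(S)+e(S,V\setminus S)\le 9|S|/4$, so that minimality applies to $G-S$. The delicate part is counting edges leaving $S$: I would bound them using (i), since any outside neighbour of a vertex in $N(z)$ risks creating a $W_{2,4}$. I expect the case analysis for degrees $5$ and $6$, where the greedy leaf selection fails only narrowly, to dominate the work.
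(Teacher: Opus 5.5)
\textbf{There is a genuine gap: the upper bound is not proved.} Your lower bound, the limit statement, and the reduction to $\delta(G)\ge3$ are correct and agree with the paper. For the upper bound, which is the real content of the theorem, you announce a discharging scheme with charge $d(v)-9/2$ but give no rules and verify no inequality.

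The one concrete structural claim, (ii), is wrong. A vertex $z$ with $d(z)\ge7$ has \emph{no} vertex $x$ of degree at least three at distance two, whatever the overlap. Two leaves taken from $N(x)\setminus\{y\}$ still leave at least $d(z)-3\ge4$ leaves in $N(z)\setminus\{y\}$; this is \eqref{eq:w24nonadj}. So such a $z$ is adjacent to its whole component, and that component is bounded directly. If $d(z)=7$ it has eight vertices. If $d(z)\ge8$, all other degrees equal three by \eqref{eq:w24adj}.

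Your sign convention is also off. The contradiction needs every vertex to end with \emph{nonpositive} charge. The surplus sits at vertices of degree five and six, and the question is whether the deficits $3/2$ and $1/2$ can absorb it, not whether low vertices get ``paid.''

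\textbf{The missing idea is control of how low vertices are shared among degree-five vertices.} This is what makes the absorption work. Spreading each degree-five vertex's surplus over its low neighbours works when it has five low neighbours, but it fails on dense clusters. Suppose the degree-five vertices induce an octahedron. Then each has a single low neighbour, and the total surplus $6\cdot\frac12=3$ may have to be absorbed by only two low vertices, each adjacent to three octahedral vertices. That is possible only if both have degree three, which needs a separate argument. The paper gets it by applying \eqref{eq:w24nonadj} to such a low vertex and the opposite octahedral vertex, using face separation.

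The paper's proof rests on ingredients you would have to reproduce:
\begin{itemize}
\item A $K_{3,3}$ argument showing that each degree-five vertex has at most one degree-five vertex at distance two. Hence the components of the subgraph induced by degree-five vertices fall into eleven explicit types.
\item A $K_{3,3}$-minor argument showing that distinct nontrivial components have disjoint sets of low neighbours, so no deficit is drawn on twice.
\item A type-by-type check that each component's low neighbours carry enough weight.
\end{itemize}
For $\Delta=6$ the paper uses the radius-two confinement: $N_2(x)$ is independent, with all neighbours in $N(x)$. The bipartite bound then gives $3r\le 2r+4$ for $r=|N_2(x)|$. For $r=2$ it needs the separate lemma that no nine-vertex triangulation is $W_{2,4}$-free.

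None of these appears in your proposal, nor a substitute for them. As written, it is a plan for the upper bound rather than a proof.
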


The main step is a structural argument for graphs whose degrees lie between three and five. The vertices of degree five induce only a short list of possible connected graphs. We show that distinct nontrivial components on this list have disjoint sets of lower-degree neighbors. A local weight assignment then proves the edge bound.

Our second result determines the planar Tur\'an number of $W_{2,5}$ for all sufficiently large orders.

\begin{theorem}\label{thm:w25}
For every positive integer $n$,
\[
\ex_{\PP}(n,W_{2,5})\le\left\lfloor\frac{5n}{2}\right\rfloor.
\]
Equality holds for every $n\ge15$. In particular, the planar Tur\'an density of $W_{2,5}$ is $5/2$.
\end{theorem}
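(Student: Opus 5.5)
We prove the upper bound by a minimal counterexample argument combined with discharging, and get equality from bounded-degree constructions. Let $G$ be a $W_{2,5}$-free planar graph on $n$ vertices with $e(G)>5n/2$, chosen with $n$ minimum. Every small case is covered by $e\le 3n-6\le 5n/2$ for $n\le 12$. Minimality gives $\delta(G)\ge 3$: deleting a vertex of degree at most two leaves a smaller counterexample, since it removes at most $2<5/2$ edges. Connectivity and the absence of cut-vertex decompositions follow the same way, since the bound is additive. The target is then to show that
\[
\sum_v \bigl(d(v)-5\bigr)\le 0 .
\]

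The first structural step restricts high-degree vertices. Suppose $x$ and $z$ have a common neighbour $y$, with $d(x)\ge 4$ and $d(z)\ge 7$. After removing $y$, the vertex $z$ still has at least $6$ neighbours, and the common neighbours of $x$ and $z$ must be split between the two leaf sets. This forces a $W_{2,5}$ unless $N(x)\cup N(z)$ is very small, in which case planarity forces the whole graph to be tiny. We make this precise by a counting lemma: if $y\in N(x)\cap N(z)$ and $|N(x)\setminus\{y,z\}|\ge 2$, $|N(z)\setminus\{x,y\}|\ge 5$, and $|(N(x)\cup N(z))\setminus\{x,y,z\}|\ge 7$, then $W_{2,5}\subseteq G$. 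In a planar graph two vertices have at most two common neighbours unless the graph is $K_{2,m}$-like, so these hypotheses essentially reduce to degree conditions. The consequence is that a vertex $v$ of degree at least $7$ has all vertices at distance two from it of degree at most $3$. Moreover, $\deg 6$ vertices are limited, because two vertices of degree at least $6$ at distance exactly two create the tree.

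The second step is discharging. Give each vertex charge $d(v)-5$. Vertices of degree $3$ and $4$ are in deficit and vertices of degree at least $6$ have surplus. Each vertex $v$ with $d(v)\ge 6$ sends its surplus to the low-degree vertices in its radius-two neighbourhood. By the first step, such a neighbourhood contains many vertices of degree at most $4$. Here planarity is used through the face structure around $v$: consecutive neighbours of $v$ together with second-neighbours behind faces give at least roughly $d(v)$ deficient vertices. We must show that every deficient vertex receives from at most a bounded number of rich vertices. Two rich vertices within distance two of each other are forbidden (or allowed only in tiny configurations), so a deficient vertex receives from essentially one rich vertex. The net charge is then nonpositive, which gives $e\le 5n/2$ and hence the floor bound by integrality.

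The main obstacle is this charge bookkeeping for vertices of degree $6$. There the surplus of $1$ must be balanced, and the forbidden configurations are weaker, so a case analysis on the degree-$6$ subgraph, analogous to the degree-$5$ component list in Theorem~\ref{thm:w24}, will likely be needed. For equality with $n\ge 15$, we use the result of Lan et~al.~\cite{LanShiSong2019}: there are planar graphs with maximum degree $5$ having $\lfloor 5n/2\rfloor$ edges for every $n\ge 15$, consisting of $5$-regular ones plus one vertex of degree $4$ when $n$ is odd. Since $W_{2,5}$ has a vertex of degree $6$, these graphs are $W_{2,5}$-free. The density statement follows immediately.
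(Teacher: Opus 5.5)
There is a genuine gap. The structural facts your discharging rests on are false, and the case you defer as ``the main obstacle'' is the entire difficulty.

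By Lemma~\ref{lem:spine}, nonadjacent degree-six vertices $x,z$ with a common neighbour yield $W_{2,5}$ only if $d(x)+d(z)-1-\cdeg(x,z)\ge7$, i.e.\ $\cdeg(x,z)\le4$. Pairs sharing five or six neighbours are not excluded; the two apices of the hexagonal bipyramid are an example. Your remark that two vertices of a planar graph have at most two common neighbours unless the graph is ``$K_{2,m}$-like'' is wrong. The relevant planar restriction is only that three vertices have at most two common neighbours ($K_{3,3}$). Large common neighbourhoods are exactly what make a configuration $W_{2,5}$-free, so your hypotheses do not reduce to degree conditions.

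More seriously, two \emph{adjacent} degree-six vertices are never excluded: with $\varepsilon=1$, five leaves at an endpoint require degree at least seven. So rich vertices can be adjacent and cluster, and nothing in your argument confines them to tiny configurations. Your rule that a deficient vertex receives from essentially one rich vertex therefore has no support.

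Likewise, for $d(x)=7$ you only get $N(z)\subseteq N(x)$ for $z\in N_2(x)$, so $z$ may have degree four. You give no way to pay the surplus $2$ of such a vertex. Also, the reduction to graphs without cut vertices is not additive: gluing two pieces at a vertex only gives $5(n+1)/2$.

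The paper closes this with strong induction on deletable sets $D$, i.e.\ sets meeting at most $5|D|/2$ edges.
\begin{itemize}
\item A vertex of degree at least eight makes $N[x]$ a whole component.
\item A degree-seven vertex gives a component $N[x]\cup N_2(x)$ of order at most $13$. The $33$-edge triangulation is excluded because each neighbour of $x$ has at most one neighbour in $N_2(x)$.
\item If the degree-six vertices are independent, a pair at distance two must share at least five neighbours, and each such vertex has at most one such partner (by $K_{3,3}$). The radius-two set $X\cup Z$ around the pair is deleted. Once degree-six vertices are pairwise at distance at least three, Euler's bound and the bipartite bound give deficit $\sum(5-d(v))\ge2$ on $N(x)\cup N_2(x)$.
\item In that last step, each low vertex lies in at most one $N(\cdot)$ and at most one $N_2(\cdot)$. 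So a deficient vertex may serve two rich vertices, not one, which is why deficit two is the right target.
\item If two degree-six vertices are adjacent, choose such an edge $xy$ minimizing $\cdeg(x,y)$ and bound private incidences as in~\eqref{eq:w25-private}. Then classify the outside neighbours $Z$ and show $X\cup Z$ is deletable for each $s=5-\cdeg(x,y)$. The minimal choice is used in the densest cases $s=0,1$.
\end{itemize}
Your equality argument via Lan et al.\ agrees with the paper.
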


The upper bound improves the previous coefficient $17/6$ to $5/2$. Its sharpness follows from the star theorem of Lan et~al.~\cite[Theorem~6]{LanShiSong2019}, which provides planar graphs of maximum degree at most five with $\lfloor5n/2\rfloor$ edges for every $n\ge15$. Since $W_{2,5}$ has a vertex of degree six, these graphs are $W_{2,5}$-free.

The proof separates two structural situations. When the vertices of degree six are independent, pairs with a common neighbor admit local deletions at an edge cost of at most $5/2$ per deleted vertex. After excluding such pairs, the neighborhoods of radius two around degree-six vertices have bounded overlap and contain enough degree deficit to prove the bound. When two degree-six vertices are adjacent, we choose such an edge with the fewest common neighbors. A local deletion argument, using this choice in the two densest cases, completes the proof. Thus both lower asymptotic coefficients noted above are optimal.

For $W_{3,4}$, we obtain an exact result in a natural subclass. A vertex is \emph{dominating} if it is adjacent to every other vertex. Write $\ex^{\mathrm{dom}}_{\PP}(n,H)$ for the maximum number of edges in an $H$-free planar graph on $n$ vertices having a dominating vertex.

\begin{theorem}\label{thm:w34}
Let $G$ be a planar graph on $n\ge10$ vertices with a dominating vertex $v$. Then $G$ is $W_{3,4}$-free if and only if $\Delta(G-v)\le3$. Furthermore,
\[
\ex^{\mathrm{dom}}_{\PP}(n,W_{3,4})
=\left\lfloor\frac{5n-7}{2}\right\rfloor
\qquad(n\ge10).
\]
\end{theorem}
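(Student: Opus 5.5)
The plan is to work throughout with $H=G-v$. Since $v$ is adjacent to every vertex of the planar graph $G$, all of $H$ lies in a single face of the embedding of $H$, namely the one containing $v$. Hence $H$ is outerplanar. Conversely, any outerplanar $H$ on $n-1$ vertices, with a new vertex joined to everything and placed in the outer face, gives a planar graph with a dominating vertex. Note also that $\deg_G(w)=\deg_H(w)+1$ for every $w\ne v$.

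\emph{Characterization.}
\begin{itemize}
\item Suppose some $u\ne v$ has $\deg_H(u)\ge4$. Pick an $H$-neighbour $w$ of $u$ and use the spine $x=u$, $y=w$, $z=v$; here $wv$ is an edge because $v$ is dominating. Take three further $H$-neighbours of $u$ as the leaves at $x$.
\item Five vertices are now used besides $v$. So $v$ still has $n-1-5=n-6\ge4$ unused neighbours, which serve as the four leaves at $z$. This gives a copy of $W_{3,4}$, and it is exactly where $n\ge10$ is needed.
\item Suppose instead $\Delta(H)\le3$, and $W_{3,4}$ embeds in $G$. The spine end $z$ needs degree $\ge5$. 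Every $w\ne v$ has $\deg_G(w)\le4$, so $z=v$.
\item Then $x\ne v$ needs degree $\ge4$, so $\deg_G(x)=4$ and all four neighbours of $x$ are $y$ and its three leaves. But $v$ is a neighbour of $x$, and $v$ is neither $y$ nor a leaf at $x$, since $v=z$ is at distance two from $x$ in the tree. This is a contradiction, so $G$ is $W_{3,4}$-free.
\end{itemize}

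\emph{Upper bound.} Set $m=n-1$, so $e(G)=(n-1)+e(H)$ with $H$ outerplanar and $\Delta(H)\le3$. I would invoke the standard fact that every outerplanar graph on at least two vertices has at least two vertices of degree at most $2$. Then
\[
2e(H)\le 3m-2, \qquad\text{so}\qquad e(H)\le\left\lfloor\tfrac{3m-2}{2}\right\rfloor=\left\lfloor\tfrac{3n-5}{2}\right\rfloor .
\]
Adding $n-1$ gives $e(G)\le\lfloor(5n-7)/2\rfloor$.

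\emph{Construction.} Take the cycle $1,2,\dots,m$.
\begin{itemize}
\item For $m$ even, add the nested chords $\{i,\,m+2-i\}$ for $2\le i\le m/2$. Only vertices $1$ and $m/2+1$ are uncovered by chords. The graph is outerplanar with maximum degree $3$ and has $m+(m/2-1)=(3m-2)/2$ edges.
\item For $m$ odd, use the chords $\{i,\,m+2-i\}$ for $2\le i\le (m-1)/2$. The uncovered vertices are $1$, $(m+1)/2$ and $(m+3)/2$, and the graph has $m+(m-3)/2=(3m-3)/2=\lfloor(3m-2)/2\rfloor$ edges.
\end{itemize}
Adding the dominating vertex in the outer face yields a planar graph. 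It is $W_{3,4}$-free by the characterization, and it has $\lfloor(5n-7)/2\rfloor$ edges.

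The only step needing care is the contradiction for the "if" direction, namely that the spine end $x$ must have all of its neighbours used by the tree. The remaining steps reduce to the classical degree facts about outerplanar graphs.
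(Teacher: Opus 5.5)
Your proof is correct and follows the paper's argument throughout: the same spine $uwv$ with the $n-6\ge4$ count for the forward direction, the same observation that the non-$v$ spine end would need degree five for the converse, and the same two-vertices-of-degree-at-most-two bound on the outerplanar graph $G-v$. Your cycle with nested chords $\{i,m+2-i\}$ is the paper's ladder with two ears in different notation, with one ear subdivided when $m$ is odd.
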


In particular, Theorem~\ref{thm:w34} supplies connected $W_{3,4}$-free planar graphs with edge-to-vertex ratio tending to $5/2$. It does not determine the unrestricted planar Tur\'an density of $W_{3,4}$.

Our arguments also bound the orders of triangulations avoiding these trees.

\begin{theorem}\label{thm:triangles}
A $W_{2,4}$-free plane triangulation has at most eight vertices, a $W_{2,5}$-free plane triangulation has at most twelve vertices, and a $W_{3,4}$-free plane triangulation has at most eleven vertices. The first two bounds are attained.
\end{theorem}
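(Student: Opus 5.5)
We obtain each bound by combining Euler's formula with the edge bounds of Theorems~\ref{thm:w24}, \ref{thm:w25} and with a direct argument for $W_{3,4}$. A plane triangulation on $n\ge3$ vertices has exactly $3n-6$ edges. If it is $W_{2,4}$-free, Theorem~\ref{thm:w24} gives $3n-6\le 9n/4$, so $n\le8$. If it is $W_{2,5}$-free, Theorem~\ref{thm:w25} gives $3n-6\le5n/2$, so $n\le12$.

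For sharpness we exhibit explicit triangulations. For $W_{2,4}$ the extremal graphs at $n=8t$ are presumably disjoint unions of $8$-vertex triangulations with $18=9\cdot8/4$ edges. So some $8$-vertex triangulation is $W_{2,4}$-free, and we take it; one could also verify a concrete one directly. For $W_{2,5}$ the icosahedron is $5$-regular on $12$ vertices, hence has no vertex of degree six. It is therefore $W_{2,5}$-free, which shows the bound $12$ is attained.

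For $W_{3,4}$ no edge bound of density below $3$ is available, so we argue structurally. Let $T$ be a $W_{3,4}$-free triangulation on $n\ge12$ vertices. Since $3n-6$ edges give average degree close to $6$, a vertex $z$ of degree at least $5$ exists. I would first show that two vertices at distance at most $2$ with a common neighbour $y$ cannot have degrees $\ge 5$ and $\ge 6$ (roughly) unless their neighbourhoods overlap heavily. The overlap is constrained in a triangulation because $|N(x)\cap N(z)|$ is small when $x,z$ are nonadjacent: common neighbours of a nonadjacent pair lie on separating cycles. A greedy (Hall-type) choice of $3$ leaves at $x$ and $4$ at $z$, avoiding $y$ and each other, is then possible. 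This forces all vertices near a high-degree vertex to have low degree. A discharging argument then shows the degree sum is below $6n-12$ once $n\ge12$: each vertex starts with charge $\deg-6$, and vertices of degree $\ge7$ send charge to their low-degree second neighbourhood. This contradiction yields $n\le11$.

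The main obstacle is this $W_{3,4}$ step: handling the overlap of neighbourhoods in the Hall-type embedding, especially when $x$ and $z$ are adjacent or share $3$--$4$ common neighbours, and making the discharging close exactly at $n=12$. If it resists, I would try a case analysis on the maximum degree $\Delta\in\{6,7,\dots\}$, using that $\Delta\ge n-4$ forces near-domination and invoking the characterization $\Delta(G-v)\le3$ from Theorem~\ref{thm:w34}.
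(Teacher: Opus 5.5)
Your $W_{2,5}$ argument is the paper's: Euler's formula with Theorem~\ref{thm:w25}, and the $5$-regular icosahedron for sharpness. The $W_{2,4}$ deduction, however, is circular in this paper. The proof of Theorem~\ref{thm:w24} goes through Lemma~\ref{lem:w24large}, and its case $r=2$ (a component with $\Delta=6$ on nine vertices) is closed only by citing Lemma~\ref{lem:w24tri} to exclude a nine-vertex triangulation. So the eight-vertex bound must be proved directly, as the paper does:
if $d(x)\ge7$, \eqref{eq:w24nonadj} makes $x$ universal, and \eqref{eq:w24adj} then forces every other degree to be three;
if $\Delta=6$, every vertex outside $N[x]$ has all its neighbours in $N(x)$ and each neighbour of $x$ sees at most one of them, so $n\le9$, and the nine-vertex configuration is excluded by \eqref{eq:w24nonadj};
if $\Delta\le5$, a degree count around a degree-five vertex contradicts the bipartite planar bound.
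For sharpness no ``presumably'' is needed: $W_{2,4}$ has nine vertices, so every eight-vertex triangulation (e.g.\ the hexagonal bipyramid) is $W_{2,4}$-free.

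The $W_{3,4}$ part is the main gap. It is a plan rather than a proof, and its key step points the wrong way. Nonadjacent vertices of a triangulation can share many neighbours (the apices of a bipyramid). Moreover, $W_{3,4}$-freeness pushes the overlap \emph{up}: for $z\in N_2(x)$ one is forced into $N(z)\subseteq N(x)$ when $d(x)=7$, and into $|N(z)\setminus N(x)|\le1$ when $d(x)=6$. In these surviving configurations no disjoint choice of leaves exists. The contradiction must instead come from counting how many vertices of $S=V(G)\setminus N[x]$ the set $N(x)$ can support, played against $|S|=n-1-d(x)$ with $n\ge12$. When $\Delta=7$, each neighbour of $x$ meets at most one vertex of $S$; when $\Delta=6$, one uses the bipartite planar bound between $N(x)$ and $\{x\}\cup S$.

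Your discharging cannot replace this. Charges $d-6$ always total $-12$. Since every triangulation on at most nine vertices is $W_{3,4}$-free, the hypothesis $n\ge12$ must enter quantitatively, and local rules do not achieve that. Moreover, your rules only send charge from vertices of degree at least seven, so they are vacuous when $\Delta\le6$. Yet the icosahedron ($n=12$, $5$-regular) must still be excluded, and it is excluded only because two nonadjacent vertices with exactly two common neighbours give $5+5-1-2=7$ in Lemma~\ref{lem:spine}. In a general $5$-regular $12$-vertex triangulation such a pair has to be found; the paper rules out separating triangles and then uses the odd cyclic list of faces opposite $x$.

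Your fallback split on $\Delta$ with Theorem~\ref{thm:w34} matches the paper only for $\Delta\ge8$. It omits $\Delta\le5$ entirely. For $\Delta=6$, the equality case $n=12$ still needs the outerplanarity of $G[N(x)]$ to produce a neighbour $u$ with $\cdeg(x,u)\le2$, and hence $W_{3,4}$.
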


The eight-vertex bound is used as a preliminary step in the proof of Theorem~\ref{thm:w24}; it is proved independently of that theorem. The twelve-vertex bound is an immediate consequence of Theorem~\ref{thm:w25}. The eleven-vertex bound follows from restrictions on vertices at distance two from a vertex of large degree.

The main contribution for $W_{2,4}$ and $W_{2,5}$ lies in the upper-bound arguments: the matching lower-bound constructions are already available in the literature. The new structural restrictions explain why allowing vertices of high degree cannot improve their asymptotic coefficients. Figure~\ref{fig:sharp-triangulations} shows explicit triangulations attaining these two bounds; the dominating-vertex construction for $W_{3,4}$ is illustrated in Section~\ref{sec:w34}.

\begin{figure}[htbp]
\centering
\begin{minipage}[b]{.47\textwidth}
\centering
\begin{minipage}[c][4.15cm][c]{\linewidth}
\centering
\begin{tikzpicture}[x=.83cm,y=.83cm,
  every node/.style={font=\small},
  vertex/.style={circle,fill=black,inner sep=1.55pt}]
  \foreach \i in {0,...,5}
    \coordinate (r\i) at ({\i-2.5},0);
  \coordinate (u) at (0,1.45);
  \coordinate (w) at (0,-1.45);
  \foreach \i [evaluate=\i as \j using int(\i+1)] in {0,...,4}
    \draw[line width=.45pt] (r\i)--(r\j);
  \draw[line width=.45pt]
    (r0) .. controls (-2.65,3.15) and (2.65,3.15) .. (r5);
  \foreach \i in {0,...,5} {
    \draw[line width=.45pt] (u)--(r\i)--(w);
    \node[vertex] at (r\i) {};
  }
  \node[vertex] at (u) {};
  \node[vertex] at (w) {};
\end{tikzpicture}
\end{minipage}
\par\smallskip
{\small (a) Eight vertices and eighteen edges.}
\end{minipage}\hfill
\begin{minipage}[b]{.49\textwidth}
\centering
\begin{minipage}[c][4.15cm][c]{\linewidth}
\centering
\begin{tikzpicture}[x=.73cm,y=.73cm,
  every node/.style={font=\small},
  vertex/.style={circle,fill=black,inner sep=1.55pt}]
  \coordinate (P) at (0,3.4641);
  \coordinate (A0) at (-3,-1.73205);
  \coordinate (A1) at (3,-1.73205);
  \coordinate (A2) at (1.25574,.725);
  \coordinate (A3) at (0,1.45);
  \coordinate (A4) at (-1.25574,.725);
  \coordinate (B0) at (0,-1.45);
  \coordinate (B1) at (1.25574,-.725);
  \coordinate (B2) at (.56292,.325);
  \coordinate (B3) at (-.56292,.325);
  \coordinate (B4) at (-1.25574,-.725);
  \coordinate (Q) at (0,-.65);
  \foreach \i in {0,...,4} {
    \pgfmathtruncatemacro{\j}{mod(\i+1,5)}
    \pgfmathtruncatemacro{\k}{mod(\i+4,5)}
    \draw[line width=.45pt]
      (P)--(A\i) (Q)--(B\i)
      (A\i)--(A\j) (B\i)--(B\j)
      (A\i)--(B\i) (A\i)--(B\k);
  }
  \foreach \i in {0,...,4} {
    \node[vertex] at (A\i) {};
    \node[vertex] at (B\i) {};
  }
  \node[vertex] at (P) {};
  \node[vertex] at (Q) {};
\end{tikzpicture}
\end{minipage}
\par\smallskip
{\small (b) Twelve vertices and thirty edges.}
\end{minipage}
\caption{Examples attaining the sharp bounds. In (a), the hexagonal bipyramid is $W_{2,4}$-free because $W_{2,4}$ has nine vertices; $t$ disjoint copies attain Theorem~\ref{thm:w24} at $n=8t$. In (b), the icosahedral graph is $W_{2,5}$-free because every vertex has degree five; $t$ disjoint copies attain Theorem~\ref{thm:w25} at $n=12t$. The constructions for all $n\ge15$ are supplied by Lan et~al.~\cite{LanShiSong2019}.}
\label{fig:sharp-triangulations}
\end{figure}

The paper is organized as follows. Section~\ref{sec:preliminaries} establishes a criterion for embedding a quasi-double star with a specified spine. Sections~\ref{sec:w24} and~\ref{sec:w25} prove the sharp bounds for $W_{2,4}$ and $W_{2,5}$, respectively. Section~\ref{sec:w34} treats the dominating-vertex subclass and triangulations for $W_{3,4}$. We conclude with questions concerning the remaining exact values and the structure of extremal graphs.

\section{Preliminaries}\label{sec:preliminaries}

For a graph $G$, let $v(G)$ and $e(G)$ denote its numbers of vertices and edges. We write $d(x)$, $N(x)$ and $N[x]=N(x)\cup\{x\}$ for the degree, neighborhood and closed neighborhood of a vertex $x$, omitting the graph from the notation when it is clear. Put $N_2(x)=\{z:\dist_G(x,z)=2\}$ and $\cdeg(x,z)=|N(x)\cap N(z)|$. For disjoint vertex sets $A,B$, write $e(A,B)$ for the number of edges between them; when used, $e(A)$ means $e(G[A])$. We use $K_t$, $P_t$ and $C_t$ for the complete graph, path and cycle on $t$ vertices. The octahedral graph is $K_6$ with a perfect matching removed.

The next elementary criterion keeps track of the additional exclusion required when the endpoints of a spine are adjacent in the ambient graph.

\begin{lemma}\label{lem:spine}
Let $xyz$ be a path in a graph $G$, and put $\varepsilon=1$ if $xz\in E(G)$ and $\varepsilon=0$ otherwise. There is a copy of $W_{h,k}$ with spine $xyz$, having $h$ leaves at $x$ and $k$ leaves at $z$, if and only if
\begin{align}
d(x)&\ge h+1+\varepsilon,\qquad d(z)\ge k+1+\varepsilon,\label{eq:spine-degrees}\\
d(x)+d(z)-1-2\varepsilon-\cdeg(x,z)&\ge h+k.\label{eq:spine-union}
\end{align}
\end{lemma}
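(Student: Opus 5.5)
The plan is to reduce the existence of the copy to a bipartite matching (or direct counting) problem for leaf sets. Fix the spine $xyz$. A copy of $W_{h,k}$ with this spine and the prescribed orientation is exactly a choice of disjoint sets $U\subseteq A:=N(x)\setminus\{y,z\}$ and $V\subseteq B:=N(z)\setminus\{x,y\}$ with $|U|=h$ and $|V|=k$. The leaves must avoid $x,y,z$; this is why $z$ is removed from $N(x)$ and $x$ from $N(z)$, which matters only when $\varepsilon=1$, while $y$ always lies in both neighborhoods. So I will first compute $|A|=d(x)-1-\varepsilon$ and $|B|=d(z)-1-\varepsilon$. I will also check that $|A\cup B|=|A|+|B|-|A\cap B|$ with $A\cap B=(N(x)\cap N(z))\setminus\{y\}$, since neither $x$ nor $z$ lies in $N(x)\cap N(z)$ (there are no loops). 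This gives $|A\cap B|=\cdeg(x,z)-1$, and hence
\[|A\cup B|=d(x)+d(z)-1-2\varepsilon-\cdeg(x,z).\]

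The key step is the standard fact that disjoint $U\subseteq A$ and $V\subseteq B$ of sizes $h,k$ exist if and only if $|A|\ge h$, $|B|\ge k$ and $|A\cup B|\ge h+k$. Necessity is immediate, because $U\cup V\subseteq A\cup B$ has $h+k$ elements. For sufficiency I will argue greedily. First take $U$ to consist of $\min(h,|A\setminus B|)$ private elements of $A\setminus B$, completed by elements of $A\cap B$ if needed. Then take $V$ from $B\setminus U$. Its size is $|B|$ if $U$ uses no common elements. Otherwise it is
\[|B|-(h-|A\setminus B|)=|A\cup B|-h\ge k.\]
Either way $|B\setminus U|\ge k$, which settles sufficiency. (Equivalently, Hall's theorem applied to $h$ copies of $x$ and $k$ copies of $z$ would give the same result.)

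Substituting the computed cardinalities turns these three conditions exactly into \eqref{eq:spine-degrees} and \eqref{eq:spine-union}, which completes the argument. No step is a real obstacle. The only care needed is the bookkeeping of $\varepsilon$: when $xz\in E(G)$, both $z\in N(x)$ and $x\in N(z)$ must be excluded, while the shared neighbor $y$ must be subtracted once from each degree and once from the common-neighbor count. This is what produces the constant $-1-2\varepsilon$ rather than, say, $-2-2\varepsilon$.
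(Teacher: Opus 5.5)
Your proposal is correct and follows essentially the same route as the paper's proof. You use the same leaf sets $A=N(x)\setminus\{y,z\}$ and $B=N(z)\setminus\{x,y\}$ with the same cardinality bookkeeping, and the same greedy choice of the $h$ leaves at $x$ from $A\setminus B$ first, which leaves $|A\cup B|-h\ge k$ elements of $B$ for $z$.
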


\begin{proof}
The available leaf sets are $A=N(x)\setminus\{y,z\}$ and $B=N(z)\setminus\{x,y\}$. Their sizes are $d(x)-1-\varepsilon$ and $d(z)-1-\varepsilon$, and their intersection has size $\cdeg(x,z)-1$. The necessary conditions for choosing disjoint subsets of sizes $h$ and $k$ are therefore exactly~\eqref{eq:spine-degrees}--\eqref{eq:spine-union}. They are sufficient as well: choose the $h$ elements of $A$ using as many elements of $A\setminus B$ as possible. The inequalities $|B|\ge k$ and $|A\cup B|\ge h+k$ ensure that at least $k$ elements of $B$ remain.
\end{proof}

We use the planar inequalities $e(G)\le3v(G)-6$ and, for bipartite planar graphs, $e(G)\le2v(G)-4$, whenever the graph has at least three vertices. A plane triangulation means a connected plane graph in which every face, including the outer face, is bounded by a triangle. A simple plane triangulation on at least four vertices has minimum degree at least three, and the neighbors of each vertex appear on a cycle in their cyclic order.

We will also use two elementary embedding observations. First, if $H$ is a 2-connected plane subgraph, each component of $G-V(H)$ lies in a single face of $H$, and all its neighbors in $H$ belong to that face boundary. Second, $G[N(x)]$ is outerplanar: in the plane subgraph induced by $N[x]$, deleting $x$ exposes all the vertices of $N(x)$ to one face.

\begin{lemma}\label{lem:adjacent-three}
A simple plane triangulation on more than four vertices has no adjacent vertices of degree three.
\end{lemma}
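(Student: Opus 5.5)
We argue by contradiction. Let $G$ be a simple plane triangulation with more than four vertices, and suppose $u$ and $w$ are adjacent vertices with $d(u)=d(w)=3$. The plan is to show that $G$ must then be $K_4$, which has only four vertices.

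First we use the fact recalled above that the neighbours of a vertex in a triangulation lie on a cycle in their cyclic order. Write $N(u)=\{w,a,b\}$. The link of $u$ is then the triangle $wab$, so $wa$, $wb$ and $ab$ are all edges. Since $d(w)=3$ and $w$ is already adjacent to $u$, $a$ and $b$, we get $N(w)=\{u,a,b\}$. Thus $\{u,w,a,b\}$ induces a $K_4$.

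Next we study how this $K_4$ sits in the embedding. The faces at $u$ are exactly $uwa$, $uab$ and $uwb$, because $u$ has degree three. Likewise, the faces at $w$ are exactly $wua$, $wab$ and $wub$. Hence all four triangles of the embedded $K_4$, namely $uwa$, $uwb$, $uab$ and $wab$, are faces of $G$.

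Finally we conclude. The embedded $K_4$ is 2-connected and its four triangles cover the whole sphere. By the first embedding observation, any component of $G-\{u,w,a,b\}$ would have to lie inside one face of this $K_4$. But each of these faces is also a face of $G$, so it contains no vertices. Therefore $V(G)=\{u,w,a,b\}$, which contradicts $v(G)>4$.

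The only delicate point is justifying that the triangle $wab$ is a face of $G$, and not merely a separating triangle. That is exactly what the degree condition on $w$ provides, so I do not expect any real obstacle. An alternative finish is an edge count: the face $wab$ is bounded by the triangle, so the rest of the graph lies in no face of the $K_4$.
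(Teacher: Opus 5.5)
Your proof is correct and follows essentially the same route as the paper: you show that $u,w,a,b$ induce a $K_4$, use the degree-three rotations at $u$ and $w$ to make all four of its triangles faces of $G$, and conclude that these four faces cover the sphere, leaving no room for a fifth vertex. The only cosmetic difference is at the start: you obtain the edges among $w,a,b$ from the link cycle of $u$, whereas the paper first identifies $a,b$ as the two facial third vertices on the edge $uw$.
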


\begin{proof}
Suppose that $u,v$ are adjacent and both have degree three. The two facial third vertices on $uv$ are their two remaining neighbors, say $a,b$. The neighborhood of $u$ is a triangle, so $ab$ is an edge. The vertices $u,v,a,b$ induce $K_4$, and their four triangles are facial: two are incident with $uv$, and the other two follow from the degree-three rotations at $u$ and $v$. They cover the sphere, leaving no place for another vertex.
\end{proof}

\section{The planar Tur\'an density of \texorpdfstring{$W_{2,4}$}{W(2,4)}}
\label{sec:w24}

We first record two consequences of Lemma~\ref{lem:spine} that will be
used throughout this section. If $x,z$ are nonadjacent vertices with a
common neighbor, $d(x)\geq5$, and $d(z)\geq3$, then $W_{2,4}$-freeness
implies
\begin{equation}\label{eq:w24nonadj}
 d(x)+d(z)-\cdeg(x,z)\leq6.
\end{equation}
If instead $xz$ is an edge with a common neighbor, $d(x)\geq6$, and
$d(z)\geq4$, then it implies
\begin{equation}\label{eq:w24adj}
 d(x)+d(z)-\cdeg(x,z)\leq8.
\end{equation}
In particular, if $d(x)=5$ and $z$ is a nonadjacent vertex of degree
at least three at distance two from $x$, then
$|N(z)\setminus N(x)|\leq1$.

\subsection{A threshold for triangulations}

\begin{lemma}\label{lem:w24tri}
Every simple plane triangulation on at least nine vertices contains
$W_{2,4}$. The bound nine is best possible.
\end{lemma}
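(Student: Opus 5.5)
The bound nine is sharp: the hexagonal bipyramid of Figure~\ref{fig:sharp-triangulations}(a) is a triangulation on eight vertices, and it cannot contain $W_{2,4}$, which has nine vertices. For the main claim, let $G$ be a simple plane triangulation on $n\ge9$ vertices. I will locate a path $xyz$ satisfying the conditions of Lemma~\ref{lem:spine}, with $h=2$ leaves at one end and $k=4$ at the other (either orientation may be used). Recall that $e(G)=3n-6$, so $\sum d(v)=6n-12$ and $\delta(G)\ge3$. Also, for every vertex $x$, the neighbours of $x$ lie on a cycle $C_x$ of length $d(x)$, and each face at $x$ is a triangle through an edge of $C_x$. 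I split into cases according to $\Delta(G)$.

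\emph{Case $\Delta(G)\ge6$.} Fix $x$ with $d(x)=\Delta\ge6$, and put the four leaves at $x$.

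First suppose $N[x]\ne V(G)$. Then some vertex $z$ lies at distance exactly two from $x$, since $G$ is connected. If $z$ has a neighbour outside $N[x]$, then $d(z)-\cdeg(x,z)\ge1$. Hence $d(x)+d(z)-1-\cdeg(x,z)\ge6$, and Lemma~\ref{lem:spine} (with $\varepsilon=0$) gives $W_{2,4}$. So we may assume every vertex of $N_2(x)$ has all its neighbours in $N(x)$. Then $N[x]\cup N_2(x)=V(G)$, and every vertex of $N_2(x)$ sits inside a face of the wheel $G[N[x]]$ lying outside $C_x$. Two such vertices $z,z'$ then separate the outside of $C_x$. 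I expect planarity to force a vertex $z\in N_2(x)$ with $d(z)\ge3$ and $\cdeg(x,z)=d(z)\le d(x)-3$. The reason is that at least two vertices of $N_2(x)$ exist or $C_x$ carries chords; otherwise $n\le d(x)+2$ and we are in the dominating-type situation treated next. Such a $z$ satisfies~\eqref{eq:spine-union} with three to spare whenever $d(x)\ge7$, and with equality-type slack when $d(x)=6$. In the latter situation I would instead swap the roles and use a vertex $y\in C_x$ of degree at least $5$ as the four-leaf end.

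Now suppose $x$ is dominating. Then $G-x$ is a maximal outerplanar graph on $n-1\ge8$ vertices. Take $z\in N(x)$ of degree at least $5$ in $G$, and a common neighbour $y$ of $x$ and $z$. Apply Lemma~\ref{lem:spine} with $\varepsilon=1$: the condition $d(x)\ge6$ holds, and $\cdeg(x,z)=d(z)-1$, so~\eqref{eq:spine-union} reads $d(x)-2\ge6$. This holds because $d(x)=n-1\ge8$. Here $x$ carries the four leaves, and $z$ needs $d(z)\ge4$, which a maximal outerplanar graph on at least $8$ vertices provides.

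\emph{Case $\Delta(G)\le5$.} Now $6n-12\le5n$, so $9\le n\le12$. This is a finite family of triangulations. Fix $x$ of degree $5$; it exists, since otherwise $6n-12\le4n$, which gives $n\le6$. Any $z\in N_2(x)$ has $\cdeg(x,z)\le2$, because a vertex outside the $5$-wheel sees at most two consecutive vertices of $C_x$ unless it closes off a face. Condition~\eqref{eq:spine-union} then asks for $d(z)\ge4$, i.e.\ for some $z\in N_2(x)$ that is not a degree-three vertex nested in a face of the wheel. I would show that if every vertex at distance two from every degree-$5$ vertex had degree three, then Lemma~\ref{lem:adjacent-three} and the degree sum force $n\le8$.

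The main obstacle is the case $\Delta(G)=6$ with all vertices within distance two of $x$, and the small range $\Delta\le5$, $9\le n\le12$. These are the places where the slack in Lemma~\ref{lem:spine} vanishes, and a careful but finite local case analysis around $C_x$ seems unavoidable. I would organise it by the number of vertices in $N_2(x)$ and the chords of $C_x$.
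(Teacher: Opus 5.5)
There is a genuine gap: the two cases that carry all the content are left unproved. Several parts are fine. Your sharpness remark is correct. So is the dominating case, where you only need some $z\in N(x)$ with $d(z)\ge4$; it exists because a maximal outerplanar graph on at least four vertices has average degree above two. For $d(x)\ge7$ you need no special $z$ at all, since every $z\in N_2(x)$ already gives $d(x)+d(z)-1-\cdeg(x,z)\ge d(x)-1\ge6$.

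The gaps are elsewhere. For $d(x)=6$ with $N(z)\subseteq N(x)$ for all $z\in N_2(x)$, you only say you would ``swap the roles'' and use a rim vertex of degree at least five. You do not say why such a vertex exists, or which partner and spine work. For $\Delta(G)\le5$ you only announce that the degree sum ``would force $n\le8$''. Worse, your claim that $\cdeg(x,z)\le2$ for $d(x)=5$ and $z\in N_2(x)$ is false. Take the triaugmented triangular prism: the prism on triangles $a_1a_2a_3$ and $b_1b_2b_3$, with each quadrilateral face capped by a new vertex. It has nine vertices of degrees four and five. There $a_1$ and $b_2$ are nonadjacent, both of degree five, and have three common neighbours: $a_2$, $b_1$, and the cap of $a_1a_2b_2b_1$. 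In fact $W_{2,4}$-freeness pushes $\cdeg(x,z)$ \emph{up}, not down.

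What is missing is a global count that replaces your planned local case analysis.

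\emph{Case $d(x)=6$.} Apply Lemma~\ref{lem:spine} to the adjacent pair $x,u$ with $u\in N(x)$ and four leaves at $x$. By \eqref{eq:w24adj}, $d(u)-\cdeg(x,u)\le2$, so $u$ has at most one neighbour in $S=V(G)\setminus N[x]$. If $d(u)=3$ it has none, since the two facial third vertices on $xu$ lie in $N(x)$. As $S$ is independent with all neighbours in $N(x)$ and $\delta(G)\ge3$, this gives $3|S|\le6$, hence $n\le9$. At $n=9$, the two vertices $z_1,z_2$ of $S$ have complementary triangular neighbourhoods $U,V\subseteq N(x)$. Then $G[N(x)]$ has nine edges, three of them between $U$ and $V$. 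A vertex $v\in V$ with $k\ge1$ neighbours in $U$ satisfies $d(v)+d(z_1)-\cdeg(v,z_1)=7$, contradicting \eqref{eq:w24nonadj}. This is your ``swap'' step, but it only becomes available after the count pins down the structure.

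\emph{Case $\Delta(G)\le5$.} Take $d(x)=5$. By \eqref{eq:w24nonadj}, $|N(z)\setminus N(x)|\le1$ for every $z\in N_2(x)$. A facial triangle on any edge $zw$ then excludes vertices at distance three, so $G[S]$ is a matching. With $r=n-6$, Euler's degree sum gives $\sum_{z\in S}d(z)\ge6r-6$. The bipartite planar bound on the edges between $N(x)$ and $S\cup\{x\}$ gives $5+\sum_{z\in S}d(z)-2e(G[S])\le2r+8$. These are incompatible for $r\ge3$.

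No enumeration of triangulations with $9\le n\le12$ is needed. Without some such count, however, your proposal does not establish the lemma.
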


\begin{proof}
Suppose that $G$ is a $W_{2,4}$-free plane triangulation with $n\geq9$.
We have $\delta(G)\geq3$ and $e(G)=3n-6$.

First, suppose that $d(x)\geq7$. A vertex $z$ at distance two from $x$
would satisfy
\[
 d(x)+d(z)-\cdeg(x,z)\geq d(x)\geq7,
\]
contrary to \eqref{eq:w24nonadj}. Thus $x$ is universal, by connectedness,
and $d(x)=n-1\geq8$. If a neighbor $u$ of $x$ has degree at least four,
then $x,u$ have a common neighbor and
$\cdeg(x,u)=d(u)-1$. Consequently
$d(x)+d(u)-\cdeg(x,u)=d(x)+1\geq9$, contrary to
\eqref{eq:w24adj}. All other vertices therefore have degree three,
giving $e(G)=2(n-1)<3n-6$. Hence $\Delta(G)\leq6$.

Suppose next that $d(x)=6$, and put $N=N(x)$ and
$S=V(G)\setminus N[x]$. For a vertex $z$ at distance two from $x$, we have
$d(z)=\cdeg(x,z)$ by \eqref{eq:w24nonadj}, so $N(z)\subseteq N$.
There is therefore no vertex at distance three from $x$. It follows
that $S$ is independent and that every vertex of $S$ has all its
neighbors in $N$.

For $u\in N$, write $s(u)=|N(u)\cap S|$. If $d(u)\geq4$, the edge $xu$
has a common neighbor. By \eqref{eq:w24adj}, we have
\[
 1+s(u)=d(u)-\cdeg(x,u)\leq2.
\]
If $d(u)=3$, the two facial third vertices on $xu$ are its two neighbors
other than $x$, and both belong to $N$; hence $s(u)=0$. Thus $s(u)\leq1$
for every $u\in N$. Since every vertex in $S$ has degree at least three,
we obtain $3|S|\leq6$. As $n\geq9$, equality holds: $n=9$,
$S=\{z_1,z_2\}$, and the sets $U=N(z_1)$ and $V=N(z_2)$ partition $N$
into triples. Both triples induce triangles, being neighborhoods of
degree-three vertices in a triangulation.

The graph has $21$ edges, of which six join $x$ to $N$ and six join $S$
to $N$. Thus $G[N]$ has nine edges, including three edges between $U$
and $V$. Choose $v\in V$ with $k\geq1$ neighbors in $U$. Then
$d(v)=4+k\geq5$ and $\cdeg(v,z_1)=k$. The nonadjacent vertices $v,z_1$
have a common neighbor and satisfy
$d(v)+d(z_1)-\cdeg(v,z_1)=7$, a contradiction to
\eqref{eq:w24nonadj}. This proves $\Delta(G)\leq5$.

The average degree is $6-12/n>4$, so now choose $x$ of degree five.
Put $N=N(x)$, $S=V(G)\setminus N[x]$, and $r=|S|=n-6\geq3$.
Every vertex $z$ at distance two from $x$ satisfies
$|N(z)\setminus N|\leq1$ by \eqref{eq:w24nonadj}. If such a vertex had
a neighbor $w$ at distance three from $x$, a facial third vertex on $zw$
would be a neighbor of $z$ other than $w$, and hence would belong to
$N$. This would place $w$ at distance two, a contradiction. Therefore
all vertices of $S$ are at distance two, and $\Delta(G[S])\leq1$.

The six vertices of $N[x]$ have total degree at most $30$. Hence
\[
 \sum_{z\in S}d(z)\geq6n-42=6r-6\geq4r.
\]
The bipartite planar subgraph consisting of all edges between $N$ and
$S\cup\{x\}$ has at most $2(r+6)-4$ edges. Consequently
\[
 5+4r-2\lfloor r/2\rfloor
 \leq 5+\sum_{z\in S}d(z)-2e(G[S])\leq2r+8.
\]
It follows that $2\lceil r/2\rceil\leq3$, impossible for $r\geq3$.
Finally, any eight-vertex plane triangulation is $W_{2,4}$-free because
$W_{2,4}$ has nine vertices. Such triangulations exist, proving sharpness.
\end{proof}

\subsection{Components with a vertex of degree at least six}

\begin{lemma}\label{lem:w24large}
Let $G$ be a connected $W_{2,4}$-free simple planar graph with
$\delta(G)\geq3$ and $\Delta(G)\geq6$. Then $e(G)\leq9v(G)/4$.
\end{lemma}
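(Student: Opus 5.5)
We plan to exploit the structure forced by a vertex $x$ of degree at least six, mirroring the first two steps of the proof of Lemma~\ref{lem:w24tri} but without the triangulation hypothesis. Fix $x$ with $d(x)=\Delta(G)\ge6$, and put $N=N(x)$ and $S=V(G)\setminus N[x]$. For $z\in N_2(x)$ we have $d(z)\ge3$, so \eqref{eq:w24nonadj} gives $d(x)+d(z)-\cdeg(x,z)\le6$. Since $\cdeg(x,z)\le d(z)$, this forces $d(x)\le6$. It also forces $\cdeg(x,z)=d(z)$, so $N(z)\subseteq N$. Hence either $N_2(x)=\emptyset$, in which case $G=G[N[x]]$ by connectedness, or $d(x)=6$, every vertex of $N_2(x)$ has all its neighbours in $N$, and $N_3(x)=\emptyset$. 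In both cases $S=N_2(x)$ and $S$ is independent.

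Next we control $N$ using \eqref{eq:w24adj}. Let $u\in N$ with $d(u)\ge4$, and suppose $xu$ has a common neighbour. Then $d(x)+d(u)-\cdeg(x,u)\le 8$. Since $u$ is adjacent to $x$, this says
\[
1+|N(u)\setminus N[x]|+\bigl(d(x)-1-\cdeg(x,u)\bigr)\le 8-d(x)+1 .
\]
Concretely, $s(u):=|N(u)\cap S|\le 8-d(x)-(d(x)-1-\cdeg(x,u))+\ldots$; we will simply use the crude consequence $s(u)\le 8-d(x)\le 2$ in the case $d(x)=6$. Planarity of $G[N]$ (it is outerplanar, so $e(G[N])\le 2d(x)-3$) will limit $\cdeg(x,u)=d_{G[N]}(u)$.

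In the case $d(x)\ge7$, where $G=G[N[x]]$, every $u\in N$ of degree $\ge4$ has a common neighbour with $x$, because $\delta\ge3$ forces $u$ to have a neighbour in $N$. Then \eqref{eq:w24adj} gives $d(u)-\cdeg(x,u)\le 8-d(x)\le1$, which is impossible. So all of $N$ has degree three, and $e(G)=\bigl(d(x)+3d(x)\bigr)/2=2d(x)<\tfrac94(d(x)+1)$.

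The main case is $d(x)=6$. Here $v(G)=7+|S|$. The edges are those from $x$ to $N$ (six), plus $e(G[N])\le 9$ by outerplanarity, plus $e(N,S)=\sum_{u\in N}s(u)$. Each $z\in S$ contributes at least $3$ to $e(N,S)$. Bounding $s(u)$ per vertex should give $e(N,S)\le 12$, which yields $|S|\le4$ and $v(G)\le 11$, and then a finite count finishes the case. The target is $e(G)\le\lfloor 9v/4\rfloor$, that is, $e\le15,18,20,22,24$ for $v=7,\ldots,11$.

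We expect the main obstacle to be sharpening these counts. The crude bound $6+9+12=27$ exceeds $24$, so we must play outerplanarity of $G[N]$ against the $s(u)$ bounds. A vertex $u\in N$ with large degree in $G[N]$ pushes $d(u)$ up, and \eqref{eq:w24adj} then restricts $s(u)$ only through $d(u)-\cdeg(x,u)=1+s(u)\le2$, so $s(u)\le1$ whenever $d(u)\ge4$. A vertex with $d(u)=3$ has $s(u)+d_{G[N]}(u)=2$. In every case, then, $s(u)\le1$, with $s(u)=1$ only if $d_{G[N]}(u)\ge 2$ or $d(u)=3$.

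With $s(u)\le1$ we get $3|S|\le e(N,S)\le6$, so $|S|\le2$ and $v\le9$. We then verify directly:
\begin{itemize}
\item $|S|=0$: $e\le 6+9=15$.
\item $|S|=1$: $e\le 6+9+3=18$, and $\lfloor 9\cdot8/4\rfloor=18$.
\item $|S|=2$: $S$ uses all six vertices of $N$, each with $s(u)=1$, so $e=12+e(G[N])\le21$. We need $e\le20$, so we must exclude $e(G[N])=9$. That would be a maximal outerplanar graph on $N$ together with $x$ and two further vertices attached to disjoint triples. Here we will adapt the last paragraph of the $d(x)=6$ case in Lemma~\ref{lem:w24tri}: take $v\in N(z_2)$ with a neighbour in $N(z_1)$ and apply \eqref{eq:w24nonadj} to the pair $v,z_1$. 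If that fails, we use planarity, since a triangulated hexagon plus three outside apexes contains a $K_{3,3}$.
\end{itemize}
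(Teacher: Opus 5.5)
\textbf{The step that fails.} Your main case rests on the claim that $s(u)\le1$ for every $u\in N$, and this claim is false. Inequality \eqref{eq:w24adj} applies only when the edge $xu$ has a common neighbour, i.e.\ when $d_{G[N]}(u)\ge1$. You stated this hypothesis and then dropped it. The facial argument that gave $s(u)=0$ for $d(u)=3$ in Lemma~\ref{lem:w24tri} needs a triangulation. Your own identity $s(u)+d_{G[N]}(u)=2$ for $d(u)=3$ already allows $s(u)=2$.

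Here is a concrete counterexample. Let $x$ be adjacent to $u_1,\dots,u_6$, let $z_1$ be adjacent to $u_1,u_2,u_3$ and $z_2$ to $u_1,u_4,u_5$, and add the path $u_2u_3u_6u_5u_4$. Place $u_1,u_2,u_3,u_6,u_5,u_4$ in this cyclic order around $x$, with $z_1,z_2$ outside. The graph is then planar and connected with $\delta=3$.
\begin{itemize}
\item Only $x$ has degree at least five, so $x$ must carry the four leaves.
\item Condition \eqref{eq:spine-union} fails for $z_1,z_2$, since $6+3-1-3=5<6$.
\item It also fails for $u_3,u_5$, since $6+4-3-2=5<6$.
\item The remaining vertices of $N$ have degree three, too small to carry two leaves.
\end{itemize}
So the graph is $W_{2,4}$-free, yet $s(u_1)=2$ and $s(u_6)=0$. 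Consequently the bounds $e(N,S)\le6$ and $|S|\le2$ are unproved. The cases $|S|\in\{3,4\}$ are never treated, and your description of the case $|S|=2$ (every $s(u)=1$, all six vertices of $N$ used) is wrong.

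\textbf{What is missing.} You need a global planarity count rather than a per-vertex bound. Write $r=|S|$ and $q=e(N,S)$. The bipartite planar subgraph between $N$ and $S\cup\{x\}$ gives $3r\le q\le2r+4$, hence $r\le4$.

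What \eqref{eq:w24adj} does yield is that a vertex with $s(u)\ge2$ has no neighbour in $N$, since otherwise $d(x)+d(u)-\cdeg(x,u)=7+s(u)\ge9$. Hence every edge of $G[N]$ lies inside $L=\{u\in N:s(u)\le1\}$. For $r\in\{3,4\}$, the inequality $|L|\ge5$ would give $q\le5+r<3r$, so $|L|\le4$. Then $e(G[N])\le6$ and $e(G)\le22$, respectively $24$, which suffices.

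For $r=2$, Euler's bound on nine vertices gives at most $21$ edges. Equality would make $G$ a triangulation, which Lemma~\ref{lem:w24tri} excludes.

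\textbf{A smaller slip.} In your case $d(x)\ge7$, the inequality $d(u)-\cdeg(x,u)\le8-d(x)$ reads $1\le1$ when $d(x)=7$, so it gives no contradiction. Vertices of $N$ of degree four do occur there, for instance when $G[N]$ is $C_7$ plus a chord. Use Euler's bound $e\le18$ on eight vertices instead. Likewise, your count for $|S|=1$ assumes $d(z)=3$; Euler's bound on eight vertices is what actually gives $18$.
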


\begin{proof}
If $d(x)\geq7$, the argument using \eqref{eq:w24nonadj} in the proof of
Lemma~\ref{lem:w24tri} again shows that $x$ is universal. If $d(x)=7$,
then $v(G)=8$ and, by the planar bound, $e(G)\leq18=9v(G)/4$.
If $d(x)\geq8$, \eqref{eq:w24adj} forces every other vertex to have
degree three, so $e(G)=2(v(G)-1)<9v(G)/4$.

We may therefore assume $\Delta(G)=6$. Choose $x$ of degree six, and put
$N=N(x)$, $S=V(G)\setminus N[x]$, $r=|S|$, and $q=e(N,S)$.
As before, \eqref{eq:w24nonadj} implies that $S$ is independent and
$N(z)\subseteq N$ for every $z\in S$. Applying the bipartite planar bound to the subgraph
between $N$ and $S\cup\{x\}$, we obtain
\begin{equation}\label{eq:w24radius}
 3r\leq q\leq2(r+7)-4-6=2r+4.
\end{equation}
In particular, $r\leq4$. If $r\leq1$, the ordinary planar bound on
$r+7$ vertices proves the assertion. If $r=2$,
Lemma~\ref{lem:w24tri} rules out $21$ edges on nine vertices, so
$e(G)\leq20<9v(G)/4$.

It remains to consider $r\in\{3,4\}$. Write $s(u)=|N(u)\cap S|$ for
$u\in N$, and let $L=\{u\in N:s(u)\leq1\}$. Every edge in $G[N]$ has
both endpoints in $L$: if $u$ has a neighbor in $N$ and $s(u)\geq2$,
then $x,u$ have a common neighbor, $d(u)\geq4$, and
\[
 d(x)+d(u)-\cdeg(x,u)=7+s(u)\geq9,
\]
contrary to \eqref{eq:w24adj}.
For $r=3$, we have $9\leq q\leq10$. If $|L|\geq5$, then
$q=\sum_{u\in N}s(u)\leq5+3=8$, a contradiction. Hence $|L|\leq4$
and $e(G[N])\leq6$, yielding $e(G)=6+q+e(G[N])\leq22<9\cdot10/4$.
For $r=4$, we have $q=12$ by \eqref{eq:w24radius}. Again, $|L|\geq5$ would
imply $q\leq5+4=9$. Thus $e(G[N])\leq6$ and
$e(G)\leq6+12+6=24<9\cdot11/4$.
\end{proof}

\subsection{The components induced by vertices of degree five}

\begin{lemma}\label{lem:w24bounded}
Every $W_{2,4}$-free simple planar graph $G$ with
$3\leq\delta(G)\leq\Delta(G)\leq5$ satisfies $e(G)\leq9v(G)/4$.
\end{lemma}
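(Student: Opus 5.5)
We prove $e(G)\le 9v(G)/4$ by discharging. Write $n_i$ for the number of vertices of degree $i\in\{3,4,5\}$. Then $2e(G)=3n_3+4n_4+5n_5$, and the target inequality $2e(G)\le 9n/2$ is equivalent to
\[
n_5\le 3n_3+n_4 .
\]
Give each vertex of degree five charge $-1/2$, each vertex of degree four charge $+1/2$, and each vertex of degree three charge $+3/2$. It suffices to redistribute so that every final charge is nonnegative. Equivalently, we must show that the vertices of degree five are \emph{paid for} by the degree deficits of lower-degree vertices near them.

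The first step is structural: understand $H=G[V_5]$, where $V_5$ is the set of degree-five vertices. Fix $x\in V_5$. By \eqref{eq:w24nonadj}, every vertex $z$ at distance two from $x$ with $xz\notin E$ satisfies $|N(z)\setminus N(x)|\le1$. In particular, a degree-five vertex at distance two from $x$ must share at least four neighbors with $x$. Combining this with the outerplanarity of $G[N(x)]$ and the planar bound on $K_{3,3}$-like configurations, we aim to show that each component of $H$ belongs to a short explicit list. Natural candidates are $K_1$, $K_2$, a path or triangle, and a few small subgraphs of the octahedron or of the icosahedron neighbourhood. We also want to show that a nontrivial component $C$ has many neighbors of degree at most four. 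A vertex of degree five whose neighbors all have degree five would create, via its neighbors' second neighborhoods, a large dense patch forcing a $W_{2,4}$ through Lemma~\ref{lem:spine}, much as in Lemma~\ref{lem:w24tri}. Lemma~\ref{lem:w24tri} itself rules out large triangulated patches.

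The second step, and the key one, is disjointness. We show that if $C\ne C'$ are distinct nontrivial components of $H$, then no vertex of degree three or four is adjacent to both. Suppose $w$ has degree at most four and neighbors $u\in C$ and $u'\in C'$. Then $u,u'$ are at distance two through $w$, and they are nonadjacent since they lie in different components. So by \eqref{eq:w24nonadj} they share at least four neighbors. Those common neighbors, together with the degree-five neighbors of $u$ lying in $C$, overload $N(u')$: some common neighbor would have to be adjacent to a vertex of $C$, forcing it into $C'$ or giving a spine violating \eqref{eq:w24nonadj} or \eqref{eq:w24adj}. Planarity then yields a $K_{3,3}$ or a contradiction. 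The disjointness lets each nontrivial component $C$ draw charge exclusively from its own set $L(C)$ of lower-degree neighbors. Isolated degree-five vertices are handled by a separate rule: a lone $x$ with $d(x)=5$ whose neighbors all have degree at most four receives $1/10$ from each neighbor. We must check that a degree-four vertex is not asked too much; it has at most four degree-five neighbors, each taking $1/8$ in the worst case, and we tune the shares accordingly.

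The third step is the case check over the finite list. For each component type $C$, we count $|C|$ and the edges leaving $C$, namely $5|C|-2e(C)$. We then bound how many of these edges a single vertex of $L(C)$ can absorb, using \eqref{eq:w24nonadj} again: two vertices of $C$ at distance two via $w$ share most neighbors. This gives $\sum_{w\in L(C)}(\text{surplus of }w)\ge |C|/2$.

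The main obstacle is the first step, the classification of the components of $H$ together with the lower bound on the deficit around each. I would attack it by fixing a degree-five vertex $x$ and exhausting the possible graphs $G[N(x)]$, which are outerplanar on five vertices, using \eqref{eq:w24nonadj} and \eqref{eq:w24adj} to prune. I would then rely on the disjointness step to make the accounting local.
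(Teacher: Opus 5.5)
Your framework is the paper's: your charges $-1/2,+1/2,+3/2$ are half of its weights $w(v)=9-2d(v)$. The paper's proof has exactly your shape: classify the components of the degree-five subgraph $F$, separate their low neighborhoods, pay component by component, and give each isolated degree-five vertex equal shares from its five low neighbors. However, each substantive step is only announced, and the sketches you offer would not work.

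\emph{Classification.} The missing idea is global, not a survey of $G[N(x)]$. Two degree-five vertices at distance two share at least four neighbors. So a degree-five $x$ with two such vertices $z,z'$ would have at least three neighbors common to $x,z,z'$, giving $K_{3,3}$. Hence every vertex of $F$ has at most one vertex at distance two in $F$. This forces each component to be $P_4$ or $K_h$ minus a matching, and planarity then leaves eleven types, the largest being the octahedron.

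\emph{Separation.} Your argument yields no ``overload''. If $u\in C$ and a degree-five $u'\notin C$ share a low neighbor, then $N(u)$ is just four low common neighbors plus one high $p\in C$, which is consistent. Also, a low common neighbor cannot be ``forced into $C'$''. The paper needs two further moves. First, $p$ has no neighbor among the four common neighbors; otherwise $p,u'$ are at distance two and share three of them, a $K_{3,3}$ with $\{u,u',p\}$. Second, \eqref{eq:w24nonadj} for each pair $p,c$ gives $c$ a neighbor in $N(p)\setminus\{u\}$, so contracting $\{p\}\cup(N(p)\setminus\{u\})$ yields a $K_{3,3}$ minor. This uses only that $C$ is nontrivial, so it also keeps low neighbors of nontrivial components away from isolated degree-five vertices. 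You need that fact, not a ``tuning'' of shares, because the component estimates have no slack: for $C_4$ or $P_4$ the count gives only four low neighbors, each possibly of degree four, against a demand of $2$.

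\emph{Per-component check.} This cannot be done by counting the $5|C|-2e(C)$ outgoing edges. For the octahedron only six edges leave and a low vertex can take three, so you get just two low neighbors. If both had degree four they would supply $1$ against a demand of $3$. Similar shortfalls occur for $K_4$, $K_5-e$ and $K_5-2M$. The paper closes them using a fixed embedding. For $K_5-e$, $K_5-2M$ and the octahedron, a low neighbor $v$ lies in a triangular face of the embedded component. Some degree-five vertex of $C$ off that face shares a neighbor with $v$, but at most two. Then \eqref{eq:w24nonadj} applied to this low--high pair forces $d(v)=3$. $K_4$ needs a variant of this, and $K_4-e$ a separate face argument excluding exactly three low neighbors. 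The pairs you propose, two vertices of $C$ at distance two through $w$, constrain common neighborhoods of high vertices but do not bound $d(w)$.

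As it stands, your proposal is an outline of the paper's proof with its three essential arguments missing.
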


\begin{proof}
Call a vertex \emph{high} if it has degree five, and \emph{low}
otherwise. Let $F$ be the subgraph induced by the high vertices.
Assign to each low vertex $v$ the weight $w(v)=9-2d(v)$, which is three
for degree three and one for degree four. It suffices to show that the
total low weight is at least $v(F)$, since
\begin{equation}\label{eq:w24weight}
 9v(G)-4e(G)=\sum_{v\notin V(F)}w(v)-v(F).
\end{equation}

\smallskip\noindent
\textit{Component classification.}
If high vertices $x,z$ are at distance two in $G$, then
$\cdeg(x,z)\geq4$ by \eqref{eq:w24nonadj}. If $x$ had two high
vertices $z,z'$ at distance two, their common-neighbor sets with $x$
would be two subsets of the five-set $N(x)$, each of size at least four.
Their intersection would contain at least three vertices, producing a
$K_{3,3}$ with the other part $\{x,z,z'\}$. Thus every high vertex has
at most one high vertex at distance two in $G$, and the same property
holds in $F$.

A connected graph $J$ with at most one distance-two vertex from each
vertex is either $P_4$ or a complete graph with a matching deleted.
Indeed, a geodesic of length four is impossible, because its middle
vertex would have two distance-two vertices. If the diameter is at most
two, every vertex has at most one nonneighbor, giving the latter form.
Otherwise, take a geodesic $a b c d$. An additional neighbor $u$ of $b$
would have to be adjacent to $a$, whose unique distance-two vertex is
$c$, and to $c$, whose unique distance-two vertex is $a$. It would then
have to be adjacent to $d$, whose unique distance-two vertex is $b$.
This puts $a,d$ at distance two, a contradiction. Hence
$N_J(b)=\{a,c\}$, and symmetrically $N_J(c)=\{b,d\}$. Any additional
neighbor of $a$ would have to be adjacent to $b$, again a contradiction;
the same holds for $d$. By connectedness, we conclude that $J=P_4$.

By planarity, the components of $F$ belong to the following eleven types:
\begin{equation}\label{eq:w24types}
 \begin{gathered}
 K_1,\ K_2,\ P_3,\ P_4,\ K_3,\ C_4,\ K_4-e,\ K_4,\\
 K_5-e,\ K_5-2M,\ K_6-3M.
 \end{gathered}
\end{equation}
Here $jM$ denotes a matching of size $j$; $K_6-3M$ is the octahedral
graph. To see that the list is exhaustive, a complete graph of order
$h$ with a matching removed has at least
$\binom h2-\lfloor h/2\rfloor$ edges, exceeding $3h-6$ for $h\geq7$.
At order six the entire perfect matching must be removed, and at order
five at least one edge must be removed. The smaller orders give exactly
the remaining types in \eqref{eq:w24types}.

\smallskip\noindent
\textit{Separation of low neighborhoods.}
For a nontrivial component $H$ of $F$, let $L(H)$ be its set of low
neighbors. We claim that no member of $L(H)$ is adjacent to a high
vertex outside $H$.
Suppose otherwise that high vertices $x\in V(H)$ and $z\notin V(H)$
have a common low neighbor. They are nonadjacent and have at least four
common neighbors, all low; a common high neighbor would join their
components of $F$. Choose a high neighbor $p$ of $x$. Then their common
neighborhood $C$ has exactly four vertices, and
\[
 N(x)=C\cup\{p\},\qquad N(z)=C\cup\{w\}
\]
for some vertex $w$. The vertex $p$ is adjacent to no member of $C$.
Otherwise $p,z$ would be nonadjacent high vertices at distance two, so
at least four members of $N(z)$ would be adjacent to $p$. At least three
would belong to $C$, giving a $K_{3,3}$ with the other part $\{x,z,p\}$.

For each $c\in C$, the nonadjacent vertices $p,c$ have common neighbor
$x$. Thus, by \eqref{eq:w24nonadj}, we have $|N(c)\setminus N(p)|\leq1$.
The vertex $z$ already belongs to this difference. Since $d(c)\geq3$,
the vertex $c$ has a neighbor in $N(p)\setminus\{x\}$.
The set
\[
 T=\{p\}\cup\bigl(N(p)\setminus\{x\}\bigr)
\]
is connected and is disjoint from $\{x,z\}\cup C$: the vertices $p,z$
are in different components of $F$, and $p$ has no neighbor in $C$.
Contracting $T$ to one vertex gives three pairwise disjoint branch sets,
$T$, $\{x\}$, and $\{z\}$, each adjacent to all four vertices of $C$.
Any three vertices of $C$ complete a $K_{3,3}$ minor, a contradiction.
This proves the claim. Consequently the sets $L(H)$ for nontrivial
components are pairwise disjoint, and none of their members is adjacent
to an isolated vertex of $F$.

\smallskip\noindent
\textit{Weights for nontrivial components.}
We next prove, for each nontrivial component $H$,
\begin{equation}\label{eq:w24payment}
 \sum_{v\in L(H)}w(v)\geq v(H).
\end{equation}
Put $h=v(H)$ and $m=e(H,L(H))=5h-2e(H)$. For
$H=K_2,P_3,P_4,K_3$, the values of $m$ are respectively $8,11,14,9$.
Each low vertex has at most $h$ neighbors in $H$, so $|L(H)|$ is at
least $4,4,4,3$, respectively. As all low weights are at least one,
\eqref{eq:w24payment} follows. If $H=C_4$, then $m=12$. Applying the
bipartite planar bound to the graph between $H$ and $L(H)$, we obtain
$12\leq2(4+|L(H)|)-4$, hence $|L(H)|\geq4$.

For the five remaining types we use a fixed plane embedding of $G$.
Every connected component of $G-V(H)$ lies in one face of the embedded
2-connected graph $H$, and all its neighbors in $H$ lie on that face
boundary. In particular, a vertex of $H$ outside that boundary has no
neighbor among the vertices of $G-V(H)$ in the face. This follows
directly from the separation by the facial boundary cycles.

\smallskip\noindent
\emph{Case $H=K_4-e$.}
Let $a,b$ be the vertices of degree three in $H$, and $x,y$ the vertices
of degree two. The face boundaries of $H$ are $abx$, $aby$, and $xayb$.
Here $m=10$, so $|L(H)|\geq3$. Suppose $|L(H)|=3$. Two low vertices
each adjacent to all four vertices of $H$ are impossible: together with
$H$, they would give at least $5+8=13$ edges on six vertices. Therefore
the three numbers of neighbors in $H$ must be $4,3,3$.
Let $v$ have four neighbors in $H$. It lies in the quadrilateral face,
and its four incident edges to $H$ divide that face into triangular
regions, each having only two vertices of $H$ on its boundary. Thus
neither of the other two low vertices, each with three neighbors in
$H$, can lie there. They lie in the original triangular faces and are
both adjacent to $a,b$. Together with $v$, they give $a$ and $b$ at
least three neighbors outside $H$, although each has exactly two.
This contradiction yields $|L(H)|\geq4$.

\smallskip\noindent
\emph{Case $H=K_4$.}
Here $m=8$, and a low vertex has at most three neighbors in $H$, so
$|L(H)|\geq3$. Only equality needs consideration. Some $v\in L(H)$
has three neighbors $a,b,c$ in $H$ and lies in the face $abc$.
The fourth vertex $d$ of $H$ has two low neighbors; these must be the
other two members of $L(H)$, both lying outside that face.
If $v$ had a neighbor $u$ outside $H$, it would lie in the face $abc$,
and hence would not belong to $L(H)$. In particular, $u$ has no neighbor
in $H$. For any $a\in\{a,b,c\}$, its only possible common neighbor with
$u$ is $v$: every such common neighbor must be a low neighbor of $a$
in the face $abc$, and $v$ is the only member of $L(H)$ there.
But $a,u$ are nonadjacent, $d(a)=5$, and $d(u)\geq3$, contradicting
\eqref{eq:w24nonadj}. Thus $d(v)=3$, and the total low weight is at
least $3+1+1>4$.

\smallskip\noindent
\emph{Case $H=K_5-e$.}
View $H$ as a triangular bipyramid with base triangle $abc$ and
nonadjacent apices $x,y$. Each base vertex has degree four in $H$ and
therefore a unique low neighbor. The low neighbor $v$ of $a$ lies in a
triangular face, which we may label $abx$. The other apex $y$ lies
outside this face. By face separation, its only possible common
neighbors with $v$ are $a,b$, and $a$ is a common neighbor. Applying
\eqref{eq:w24nonadj} to $y,v$, we obtain $d(v)\leq3$, so $d(v)=3$.
Here $m=7$, and every low vertex has at most three neighbors in $H$;
thus $|L(H)|\geq3$, with total weight at least $3+1+1=5$.

\smallskip\noindent
\emph{Case $H=K_5-2M$.}
This graph is a wheel with hub $u$ and rim cycle $abcd$. The hub has a
unique low neighbor $v$, lying in a triangular face, say $uab$.
The rim vertex $c$ lies outside this face and can share with $v$ only
the boundary vertices $u,b$. Since $u$ is common, \eqref{eq:w24nonadj}
again forces $d(v)=3$. Here $m=9$, and a low vertex has at most four
neighbors in $H$, so $|L(H)|\geq3$. The total weight is at least five.

\smallskip\noindent
\emph{Case $H=K_6-3M$.}
Every vertex of this octahedral graph has degree four in $H$, so $m=6$.
A low neighbor $v$ lies in a triangular face $abc$; relabel so that
$a$ is a neighbor of $v$. The octahedral vertex $b'$ opposite $b$ lies
outside this face, is adjacent to $a$, and has precisely $a,c$ as
neighbors on the face boundary. Thus $v,b'$ have a common neighbor
and at most two common neighbors. By \eqref{eq:w24nonadj},
we have $d(v)=3$. Every member of $L(H)$ therefore has weight three.
As each has at most three neighbors in $H$ and $m=6$, there are at
least two of them, providing total weight at least six.

This proves \eqref{eq:w24payment} in every nontrivial case.
Finally, each isolated vertex of $F$ has five low neighbors. Assign
$1/5$ of a unit to each of those neighbors on behalf of that high
vertex. A low vertex of degree three is assigned at most $3/5$, and
one of degree four at most $4/5$, in either case no more than its weight.
By the separation claim, none of these low vertices belongs to $L(H)$
for a nontrivial component. Thus these assignments and
\eqref{eq:w24payment} together account for every high vertex without
exceeding the total low weight. Equation~\eqref{eq:w24weight} completes
the proof.
\end{proof}

\begin{proof}[Proof of Theorem~\ref{thm:w24}]
We prove the upper bound by induction on $n$, starting with the empty
graph. If a $W_{2,4}$-free planar graph $G$ has a vertex $v$ of degree at
most two, then
\[
 e(G)\leq e(G-v)+2\leq\frac94(n-1)+2\leq\frac94n.
\]
Otherwise each component has minimum degree at least three.
Lemma~\ref{lem:w24large} applies to components of maximum degree at
least six, and Lemma~\ref{lem:w24bounded} applies to all remaining
components. Summing their edge bounds proves the upper bound.

For each positive integer $t$, take the disjoint union of $t$
eight-vertex triangulations, for instance the graph in Figure~\ref{fig:sharp-triangulations}(a). The resulting graph is $W_{2,4}$-free and has $8t$ vertices
and $18t$ edges. Adding isolated vertices proves
\[
 18\left\lfloor\frac n8\right\rfloor
 \leq\ex_{\PP}(n,W_{2,4})\leq\left\lfloor\frac{9n}{4}\right\rfloor.
\]
In particular, equality with $9n/4$ holds whenever $8\mid n$, and
$\lim_{n\to\infty}\ex_{\PP}(n,W_{2,4})/n=9/4$.
\end{proof}

\section{The planar Tur\'an number of \texorpdfstring{$W_{2,5}$}{W(2,5)}}\label{sec:w25}

We prove the sharp bound $e(G)\le5v(G)/2$ for every $W_{2,5}$-free planar graph. We first record the distance-two restriction that drives the argument. If $d(x)=6$ and $z\in N_2(x)$, then
\begin{equation}\label{eq:w25-distance-two}
|N(z)\setminus N(x)|\le1.
\end{equation}
Indeed, two neighbors of $z$ outside $N(x)$, a common neighbor of $x,z$, and the five remaining neighbors of $x$ would give a copy of $W_{2,5}$.

\begin{lemma}\label{lem:w25-independent}
Let $G$ be a $W_{2,5}$-free planar graph of maximum degree at most six. If its degree-six vertices form an independent set, then $e(G)\le5v(G)/2$.
\end{lemma}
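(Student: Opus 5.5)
We prove $e(G)\le 5v(G)/2$ by induction on $v(G)$, following the outline given in the introduction: local deletions first, then a discharging count.

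\emph{Reductions.} If some vertex has degree at most two, delete it and apply induction. We may therefore assume $\delta(G)\ge3$. Since $5n-2e(G)=\sum_v(5-d(v))$, every vertex of degree three or four has a surplus (two or one), and every degree-six vertex has a deficit of one. The goal is to show that total surplus is at least total deficit.

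\emph{Step 1: pairs of degree-six vertices with a common neighbor.} Let $x,z$ be nonadjacent degree-six vertices with a common neighbor. By~\eqref{eq:w25-distance-two} applied in both directions, $|N(z)\setminus N(x)|\le1$ and $|N(x)\setminus N(z)|\le1$, so $\cdeg(x,z)\ge5$. If $\cdeg(x,z)\ge3$ already gives a $K_{3,3}$ with a third vertex sharing three of these neighbors, we obtain a contradiction. Otherwise we delete $\{x,z\}$. The deleted edges number at most $d(x)+d(z)=12$, which exceeds $5$, so the plain deletion is too costly.

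The fix is to delete $\{x,z\}$ together with suitable low-degree common neighbors. With $C=N(x)\cap N(z)$ of size at least $5$, planarity (no $K_{3,5}$ apart from $x,z$) forces most vertices of $C$ to have at most two neighbors outside $\{x,z\}$ in a way that keeps their degrees small. The set $D=\{x,z\}\cup C$ then spans at most about $12+|C|$ edges to the rest and within itself, and $|D|\ge7$. The target is to choose a subset $D$ with total incident edges at most $5|D|/2$. I expect the accounting to work as follows: $|D|=7$ and the incident edges are $12$ plus the edges from $C$ to outside plus $e(C)$, which must be at most $17$. Since $G[C]$ together with $x,z$ is planar and outerplanar-like, $e(C)\le|C|-1$; using $W_{2,5}$-freeness through Lemma~\ref{lem:spine} on spines $x\,c\,c'$ should bound the outside edges of $C$.

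\emph{Step 2: no such pairs.} After excluding these pairs, every degree-six vertex $x$ has no degree-six vertex in $N_2(x)$, and its neighbors are low because the high vertices are independent. I will discharge as follows: each degree-six vertex $x$ collects one unit from low vertices within distance two. Every $z\in N_2(x)$ satisfies $|N(z)\setminus N(x)|\le1$, so $z$ has few neighbors outside $N(x)$. This should force degree deficit around $x$: the outerplanarity of $G[N(x)]$ gives $e(N(x))\le9$, which bounds the degrees of the neighbors. I will then show that some neighbor of $x$ has degree at most $4$ and can give $1/k$ to each of its at most $k$ degree-six neighbors.

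\emph{Main obstacle.} The delicate case is when all six neighbors of $x$ have degree five. Then the deficit must come from $N_2(x)$, and I will use~\eqref{eq:w25-distance-two} to show that vertices there have degree at most $4$, or else the configuration closes up into a small graph. Bounded overlap, meaning each such low vertex lies within distance two of only a bounded number of degree-six vertices, is what I expect to be hardest. I would prove it with the $K_{3,3}$-minor contraction trick used in Lemma~\ref{lem:w24bounded}.
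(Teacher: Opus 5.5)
Your two-phase outline (local deletion when two degree-six vertices share a neighbor, deficit counting otherwise) matches the paper's. However, Step~1 fails as designed, and Step~2 is missing its key inequality.

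\textbf{Step 1.} No deletion set inside $\{x,z\}\cup C$ can work, because nothing controls the edges from $C$ to the rest of the graph. The spine $x\,c\,c'$ gives no information: its endpoints $x,c'$ are adjacent, so Lemma~\ref{lem:spine} would need $d(x)\ge7$. Moreover, vertices of $C$ really can have three neighbors outside $\{x,z\}$. For example, let $N(x)=N(z)=C=\{c_1,\dots,c_6\}$. In the quadrilateral faces $x c_i z c_{i+1}$ of this $K_{2,6}$, place alternately one and two new vertices adjacent to $c_i$ and $c_{i+1}$. Join the two to each other, and join the single one to a pendant $K_4$. All hypotheses hold and every $c_i$ has degree five. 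Yet any $D$ consisting of $x,z$ and $k$ vertices of $C$ meets $12+3k>5(2+k)/2$ edges. Your bound $e(C)\le|C|-1$ is also false: the hexagonal bipyramid has $e(C)=|C|$.

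The missing idea is to delete the next layer as well. Put $X=\{x,z\}\cup N(x)\cup N(z)$, and let $Z$ be the set of vertices outside $X$ adjacent to $X$, with $r=|Z|$. The $K_{3,3}$ obstruction and \eqref{eq:w25-distance-two} force each $w\in Z$ to have at most two neighbors in $C$ and at most one further neighbor. Hence $d(w)=3$ when $|C|=6$, and $d(w)\in\{3,4\}$ when $|C|=5$, after a short case analysis involving the private neighbors of $x$ and $z$. The degree sum at $C$, namely $12+2e(C)+2r\le30$, then trades $e(G[X])$ against $r$. This shows that $X\cup Z$ meets at most $\min\{18+3r,21+2r\}\le5(8+r)/2$ edges when $|C|=6$; the case $|C|=5$ is analogous.

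\textbf{Step 2.} Your claim that some neighbor of $x$ has degree at most four is false in general, and for the remaining case you give no argument. More seriously, your charging ignores overlap. A low vertex has at most one degree-six neighbor here, but it can lie in $N(x)$ for one degree-six vertex and in $N_2(y)$ for another. So the single unit of surplus of a degree-four vertex may be claimed twice.

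The paper handles both issues with one inequality: for each degree-six $x$,
\[
\sum_{v\in N(x)\cup N_2(x)}(5-d(v))\ge2 .
\]
Put $S=N(x)$, $T=N_2(x)$ and $r=|T|$, and let $D_S,D_T$ be the total deficits $\sum(5-d(v))$ over $S$ and $T$. By \eqref{eq:w25-distance-two}, the set $\{x\}\cup S\cup T$ sends at most $r$ edges outward. Euler's bound on this set then gives $D_S+D_T\ge6-2r$. The bipartite bound between $S$ and $\{x\}\cup T$ gives $D_T\ge2r-4$. One of these is at least two.

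The overlap bound you expected to be hardest is elementary and needs no minor argument. The sets $N(x)$ are pairwise disjoint. If $v\in N_2(x)\cap N_2(y)$, then
\[
2d(v)-2\le|N(v)\cap N(x)|+|N(v)\cap N(y)|\le d(v),
\]
which contradicts $d(v)\ge3$. So each deficit is counted at most twice, which is exactly why the threshold must be two rather than one.
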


\begin{proof}
We use induction on $v(G)$, with the empty graph as the initial case. Deleting a vertex of degree at most two preserves the hypotheses, so assume that $\delta(G)\ge3$. Let $H$ denote the set of degree-six vertices. Each $x\in H$ has at most one vertex of $H$ at distance two. In fact, if $y,z$ were two such vertices, then, by~\eqref{eq:w25-distance-two}, we would have
\[
|N(x)\cap N(y)|\ge5,\qquad |N(x)\cap N(z)|\ge5.
\]
These two subsets of $N(x)$, each of size at least five, have at least four common elements. Any three of those elements, together with $x,y,z$, give $K_{3,3}$.

Suppose first that $x,y\in H$ have distance two. Put $C=N(x)\cap N(y)$, $t=|C|\in\{5,6\}$, $A=N(x)\setminus C$, $B=N(y)\setminus C$, and
\[
X=\{x,y\}\cup N(x)\cup N(y),\qquad
Z=\{z\notin X:N(z)\cap X\ne\varnothing\}.
\]
Write $r=|Z|$. All vertices of $X\setminus\{x,y\}$ have degree at most five, because $H$ is independent. We show that $D=X\cup Z$ meets at most $5|D|/2$ edges. Induction then applies to $G-D$, since deletion cannot create adjacent degree-six vertices.

If $t=6$, then $A=B=\varnothing$ and $|X|=8$. Every $z\in Z$ has at most two neighbors in $C$, by the $K_{3,3}$ obstruction, and at most one neighbor outside $C$, by~\eqref{eq:w25-distance-two}. Thus $d(z)=3$, with two neighbors in $C$. Writing $m=e(G[C])$ and summing the degrees at $C$, we obtain
\[
12+2m+2r\le30.
\]
Hence $e(G[X])=12+m\le21-r$. We also have $e(G[X])\le18$ by Euler's bound. The number of edges meeting $D$ is at most
\[
e(G[X])+\sum_{z\in Z}d(z)
\le\min\{18+3r,21+2r\}.
\]
The first bound is at most $5(8+r)/2$ when $r\le4$, and the second when $r\ge2$. These ranges cover all possibilities.

If $t=5$, write $A=\{a\}$ and $B=\{b\}$, so $|X|=9$. For $z\in Z$, let $a_z,b_z,c_z,o_z$ count its neighbors in $A,B,C,V(G)\setminus X$, respectively. We have $c_z\le2$. By~\eqref{eq:w25-distance-two},
\[
b_z+o_z\le1\quad\text{if }a_z+c_z>0,
\qquad
a_z+o_z\le1\quad\text{if }b_z+c_z>0.
\]
Together with $\delta(G)\ge3$, these imply exactly three possibilities: $c_z=1,a_z=b_z=1,o_z=0$; $c_z=2,a_z=b_z=0,o_z=1$; or $c_z=2,o_z=0,1\le a_z+b_z\le2$. Thus every vertex of $Z$ has degree three or four. Let $j$ count those of degree four, and let $u$ count those having a neighbor outside $X$. Then
\[
j+u\le r,\qquad
\sum_{z\in Z}d(z)=3r+j,\qquad
e(X,Z)=3r+j-u.
\]
The seven vertices in $L=X\setminus\{x,y\}$ have degree at most five. There are twelve edges between $\{x,y\}$ and $L$. Writing $m=e(G[L])$, we obtain
\[
12+2m+3r+j-u\le35.
\]
For $r\ge2$, the number of edges meeting $D$ is consequently at most
\[
12+m+3r+j
\le\frac{47}{2}+\frac{3r}{2}+\frac{j+u}{2}
\le\frac{47}{2}+2r
\le\frac52(9+r).
\]
For $r=0$, Euler's bound gives at most twenty-one edges. For $r=1$, the same bound on $G[X]$, together with the outside vertex's degree at most four, gives at most twenty-five edges. Both are at most $5|D|/2$.

We may therefore assume that distinct vertices of $H$ have distance at least three. Their open neighborhoods are pairwise disjoint. For $v\notin H$, define its nonnegative deficit $b(v)=5-d(v)$. We claim that, for each $x\in H$,
\begin{equation}\label{eq:w25-deficit}
\sum_{v\in N(x)\cup N_2(x)}b(v)\ge2.
\end{equation}
Fix $x$, put $S=N(x)$, $T=N_2(x)$ and $r=|T|$, and write $D_S=\sum_{v\in S}b(v)$ and $D_T=\sum_{v\in T}b(v)$. Neither $S$ nor $T$ meets $H$. Every vertex of $T$ has at most one neighbor outside $S$, and every neighbor of $S$ belongs to $U=\{x\}\cup S\cup T$. Thus $U$ sends at most $r$ edges to its complement. Applying Euler's bound to $G[U]$, we have
\[
36+5r-D_S-D_T-r
\le2e(G[U])\le6(7+r)-12,
\]
so $D_S+D_T\ge6-2r$. The bipartite planar subgraph between $S$ and $\{x\}\cup T$ has at least $6+4r-D_T$ edges and at most $2(7+r)-4$. Hence $D_T\ge2r-4$. If $r\le2$, the first inequality proves~\eqref{eq:w25-deficit}; if $r\ge3$, the second does.

Every vertex outside $H$ belongs to at most one set $N(x)$ with $x\in H$. It also belongs to at most one set $N_2(x)$. Otherwise, if $v\in N_2(x)\cap N_2(y)$ for distinct $x,y\in H$, then, by~\eqref{eq:w25-distance-two} and the disjointness of $N(x),N(y)$, we obtain
\[
2d(v)-2\le
|N(v)\cap N(x)|+|N(v)\cap N(y)|\le d(v),
\]
contrary to $d(v)\ge3$. Summing~\eqref{eq:w25-deficit} therefore yields
\[
2|H|\le2\sum_{v\notin H}(5-d(v)).
\]
Finally,
\[
2e(G)=5v(G)+|H|-\sum_{v\notin H}(5-d(v))\le5v(G),
\]
as required.
\end{proof}

\begin{proof}[Proof of Theorem~\ref{thm:w25}]
We use strong induction on $n$, including the empty graph. A vertex of degree at most two can be deleted, so assume that $\delta(G)\ge3$. More generally, a nonempty set $D$ meeting at most $5|D|/2$ edges can be deleted and the induction hypothesis applied to $G-D$.

We first dispose of vertices of degree at least seven. Suppose that $d(x)\ge8$. If $z\in N_2(x)$, choose a common neighbor $y$ of $x,z$ and two neighbors of $z$ different from $y$. At least five neighbors of $x$ remain after excluding these three vertices, giving $W_{2,5}$. Hence $N[x]$ is a component. If $d(x)=8$, this component has nine vertices and at most $21<5\cdot9/2$ edges. If $d(x)\ge9$, a vertex $u\in N(x)$ cannot have three neighbors in $N(x)$: one would serve as the middle of a spine from $u$ to $x$, the other two as leaves at $u$, leaving at least $d(x)-4\ge5$ leaves at $x$. Thus $\Delta(G[N(x)])\le2$, and the component has at most $2d(x)<5(d(x)+1)/2$ edges. In either case the component is deletable.

Next suppose that $d(x)=7$. Every $z\in N_2(x)$ has all its neighbors in $N(x)$. Otherwise, choose $a\in N(z)\setminus N(x)$, a common neighbor $y$ of $x,z$, and a further neighbor $b$ of $z$. The leaves $a,b$ at $z$ exclude at most one neighbor of $x$ other than $y$, again giving $W_{2,5}$. Consequently the component of $x$ has vertex set $N[x]\cup N_2(x)$, with $N_2(x)$ independent. If $r=|N_2(x)|$, applying the bipartite planar bound to the graph with parts $N(x)$ and $\{x\}\cup N_2(x)$ yields
\[
7+3r\le2(8+r)-4,
\]
so the component has order $v\le13$. If $v\le12$, we have $e\le3v-6\le5v/2$ by Euler's bound. If $v=13$, the only obstruction to the desired deletion would be a triangulation with thirty-three edges. In such a triangulation, each $u\in N(x)$ has at most one neighbor in $N_2(x)$. Indeed, if $d(u)\ge4$, the adjacent endpoints $x,u$ have a common neighbor, and Lemma~\ref{lem:spine} and freeness force
\[
d(u)-\cdeg(x,u)\le2.
\]
One neighbor counted outside $N(x)$ is $x$ itself, leaving room for at most one vertex of $N_2(x)$. If $d(u)=3$, the two facial third vertices on $xu$ are the other two neighbors of $u$, so $u$ has no neighbor in $N_2(x)$. It follows that $3|N_2(x)|\le7$, contrary to $v=13$ and $|N_2(x)|=5$. Thus the component has at most thirty-two edges and is deletable in this case as well.

We may now assume that all degrees lie between three and six. If the degree-six vertices are independent, Lemma~\ref{lem:w25-independent} proves the result. Otherwise choose an edge $xy$ with $d(x)=d(y)=6$ such that $\cdeg(x,y)$ is as small as possible among all such edges. Set
\begin{align*}
C&=N(x)\cap N(y), &t&=|C|,\\
A&=N(x)\setminus N[y], &B&=N(y)\setminus N[x],\\
s&=|A|=|B|=5-t, &X&=\{x,y\}\cup A\cup B\cup C,
\end{align*}
and let $R=V(G)\setminus X$. Thus $|X|=7+s$. Every $a\in A$ has at most one neighbor in $A\cup R$, since two such neighbors would be the small leaves on the spine $axy$, with $N(y)\setminus\{x\}$ providing the five large leaves. The analogous assertion holds for $B$, so
\begin{equation}\label{eq:w25-private}
2e(A)+e(A,R)\le s,\qquad 2e(B)+e(B,R)\le s.
\end{equation}

Suppose first that $t\le1$. The bipartite planar subgraph consisting of the edges between $A\cup\{y\}$ and $B\cup\{x\}$ has at most $4s$ edges. At most $6t$ edges meet $C$. All remaining edges meeting $X$ lie inside $A$ or $B$, or join these sets to $R$; their number is at most $2s$ by~\eqref{eq:w25-private}. Thus $X$ meets at most
\[
4s+6t+2s=30
\]
edges. For $t=0$, its order is twelve and it is deletable. For $t=1$, its order is eleven. Every vertex outside $X$ adjacent to $X$ lies at distance two from $x$ or $y$, and therefore has at most one neighbor in $R$. If there are at least two such vertices, deleting $X$ together with any two of them removes at most thirty-two edges on thirteen vertices. If there are none, Euler's bound gives at most twenty-seven edges on eleven vertices. Finally, suppose that there is exactly one such vertex $z$, and put $D=X\cup\{z\}$. The graph $G[D]$ has twelve vertices and cannot be a triangulation: the edge $xy$ has only one common neighbor even in $G$, whereas an edge of a simple triangulation on at least four vertices has two distinct facial third vertices. Hence $e(G[D])\le29$. At most one edge joins $D$ to its complement, and deleting $D$ removes at most thirty edges. All these deletions meet the required ratio.

Henceforth $2\le t\le5$ and $0\le s\le3$. Put
\[
Z=\{z\in R:N(z)\cap X\ne\varnothing\}.
\]
For $z\in Z$, write $a_z,b_z,c_z,o_z$ for its numbers of neighbors in $A,B,C,R$, respectively. We have $c_z\le2$, since three common neighbors of $x,y,z$ would give $K_{3,3}$. Moreover, by~\eqref{eq:w25-distance-two}, we have
\begin{equation}\label{eq:w25-outside}
\begin{split}
b_z+o_z&\le1\quad\text{if }a_z+c_z>0,\\
a_z+o_z&\le1\quad\text{if }b_z+c_z>0.
\end{split}
\end{equation}
Together with $d(z)\ge3$, these inequalities lead to the following exhaustive list.
\begin{itemize}
\item If $c_z=0$, exactly one of $a_z,b_z$ is positive; it is at least two, and $o_z\le1$. The degree is three or four, and degree four requires three neighbors in one private set and one neighbor in $R$.
\item If $c_z=1$, then $a_z=b_z=1$ and $o_z=0$, so the degree is three.
\item If $c_z=2$, then either $o_z=1$ and $a_z=b_z=0$, or $o_z=0$, $a_z,b_z\le1$, and $a_z+b_z\ge1$. Degree four occurs exactly when $a_z=b_z=1$.
\end{itemize}
For the first item, positive values of both $a_z,b_z$ would force $a_z=b_z=1,o_z=0$, contrary to the minimum degree. For the second, a positive $o_z$ would force $a_z=b_z=0$ and degree at most two. These observations verify that the list includes every possibility.

Let $p,q,r_2$ count the vertices in the three classes, let $r=|Z|=p+q+r_2$, and let $j_0,j_2$ count the degree-four vertices in the first and third classes. Put $j=j_0+j_2$. Counting private incidences, we obtain
\begin{equation}\label{eq:w25-joint}
2p+2q+j_0+2j_2\le2s.
\end{equation}
Here a first-class vertex uses at least two private incidences and uses three if its degree is four; a second-class vertex uses two; and a degree-four third-class vertex uses two. Since a private set has at most $s\le3$ available incidences, it supports at most one first-class vertex. Hence $p\le2$, while $p=0$ if $s\le1$. For $s\le2$, a private set is too small to support a degree-four first-class vertex, so $j_0=0$.

In fact, $j_0=0$ also when $s=3$. To see this, suppose that a first-class vertex $z$ has degree four, and, after interchanging $A,B$ if necessary, is adjacent to all three vertices of $A$ and to one vertex of $R$. Each vertex of $A$ has used its unique allowed neighbor in $A\cup R$ on $z$. Thus all its remaining neighbors lie in $\{x\}\cup B\cup C$. Consider the plane subgraph $K_{2,3}$ with parts $\{x,z\}$ and $A$. The set $\{y\}\cup B\cup C$ is connected outside this subgraph, since $y$ is adjacent to every vertex of $B\cup C$. It therefore lies in a single face. Each face of this $K_{2,3}$ has just two vertices of $A$ on its boundary. The third vertex of $A$ can consequently have no neighbor in $B\cup C$, leaving it with only $x,z$ as neighbors. This contradicts $\delta(G)\ge3$. Hence $j_0=0$ in every remaining case, and $j=j_2$. It now follows from~\eqref{eq:w25-joint} that $p+q+j\le s$.

Let $J\subseteq Z$ be the set of degree-four vertices, so $|J|=j$. Every vertex of $J$ has exactly four neighbors, all in $X$, and there are no edges within $J$. Applying Euler's bound to $G[X\cup J]$, we have
\begin{equation}\label{eq:w25-jEuler}
e(G[X])+4j\le3(|X|+j)-6,
\qquad e(G[X])+j\le15+3s.
\end{equation}
We also need an improvement independent of $j$ when $s=3$.
In that case,
\begin{equation}\label{eq:w25-X23}
e(G[X])+j\le23.
\end{equation}
Indeed, the graph $G[X\cup J]$ cannot be a triangulation. Otherwise the six neighbors of $x$, which are unchanged by adding $J$, would form a cycle in their cyclic order. The two neighbors of $y$ on this cycle must be the two vertices of $C$, because $y$ has no neighbor in $A$. The complementary segment between those two vertices contains all three vertices of $A$ consecutively. It includes two edges of $G[A]$, contradicting $2e(A)\le3$ from~\eqref{eq:w25-private}. Thus $e(G[X])+4j\le3(10+j)-7$, proving~\eqref{eq:w25-X23}.

All edges from $X$ to its complement end at $Z$. Consequently the number $M$ of edges meeting $X\cup Z$ is
\begin{equation}\label{eq:w25-deletion}
M=e(G[X])+\sum_{z\in Z}d(z)-e(G[Z])
\le e(G[X])+3r+j.
\end{equation}
There is a second useful bound when $s\ge1$. Let $I=e(A)+e(B)$. Counting the private incidences in~\eqref{eq:w25-private} and using $j_0=0$, we obtain
\[
I\le s-p-q-j.
\]
The bipartite subgraph between $A\cup\{y\}$ and $B\cup\{x\}$ has at most $4s$ edges. Inside $X$, at most $6t-q-2r_2$ edges meet $C$, by subtracting its incidences with $Z$ from its degree budget. All remaining edges of $G[X]$ are counted by $I$. Consequently
\begin{align}
e(G[X])&\le4s+I+6t-q-2r_2,\notag\\
M&\le30-s+r+p.
\label{eq:w25-budget}
\end{align}
Here $s\ge1$ ensures that the bipartite graph has at least four vertices.

We now complete the deletion argument, considering the four values of $s$.

Suppose first that $s=0$, so $t=5$, $p=q=j=0$, and $|X|=7$. Write $m=e(G[C])$. Then $e(G[X])=11+m$, so Euler's bound implies $m\le4$. Summing the degrees at $C$, we obtain
\[
10+2m+2r\le30,\qquad m+r\le10.
\]
Thus
\[
M\le\min\{15+3r,21+2r\}.
\]
The first expression is at most $5(7+r)/2$ when $r\le5$, and the second when $r\ge7$. For $r=6$, a failure would require $m=4$, since $m\le3$ implies $M\le32<5\cdot13/2$. If $m=4$, equality holds in the degree sum at $C$, so all five vertices of $C$ have degree six. At least one of them, say $c$, has a neighbor in $Z$. Since $A=\varnothing$ and $d(c)=6$, this implies
\[
\cdeg(x,c)=1+|N(c)\cap C|\le4<5=\cdeg(x,y),
\]
contradicting the choice of $xy$. This completes the case $s=0$.

Next let $s=1$, so $t=4$ and $|X|=8$. Here $p=0$ and $q+j\le1$. For $r\le4$, it follows from~\eqref{eq:w25-deletion} and~\eqref{eq:w25-jEuler} that
\[
M\le18+3r\le\frac52(8+r).
\]
For $r\ge6$, we have, by~\eqref{eq:w25-budget},
\[
M\le29+r\le\frac52(8+r).
\]
It remains to handle $r=5$. Let $h$ count the degree-six vertices in $C$. Each such vertex $c$ has at most one neighbor in $Z$, since otherwise
\[
\cdeg(x,c)\le d(c)-1-|N(c)\cap Z|\le3<\cdeg(x,y),
\]
again contradicting the choice of $xy$. Every other vertex of $C$ has degree at most five and hence at most three neighbors in $Z$. Therefore
\[
10-q=q+2r_2=e(C,Z)\le h+3(4-h)=12-2h.
\]
Since $q\le1$, it follows that $h\le1$. The graph $G[X\setminus C]$ has at most four edges: writing $A=\{a\}$ and $B=\{b\}$, the only possible edges are $xy,xa,yb,ab$. Summing the degrees at $C$, we now obtain
\[
e(G[X])\le4+20+h-q-2r_2=24+h-2r+q.
\]
Consequently
\[
M\le24+h+r+q+j\le31<\frac52\cdot13.
\]

Suppose that $s=2$, so $t=3$ and $|X|=9$. For $r\le3$, equations~\eqref{eq:w25-deletion} and~\eqref{eq:w25-jEuler} yield
\[
M\le21+3r\le\frac52(9+r).
\]
For $r\ge5$, combining~\eqref{eq:w25-budget} with $p\le2$, we have
\[
M\le30+r\le\frac52(9+r).
\]
Consider $r=4$. If $p=0$, we obtain $M\le32<5\cdot13/2$ from~\eqref{eq:w25-budget}. If $p>0$, choose a first-class vertex $z$, and, after interchanging $A,B$ if necessary, suppose that it is adjacent to both vertices of $A$. These two adjacencies exhaust the available incidences from $A$ to $A\cup R$, and therefore $e(A)=0$. The graph $G[X\cup J]$ cannot be a triangulation, because the outside vertex $z$ has the two nonadjacent vertices of $A$ as neighbors, which cannot both belong to the boundary of a triangular face. Hence
\[
e(G[X])+4j\le3(9+j)-7,
\qquad e(G[X])+j\le20.
\]
By~\eqref{eq:w25-deletion}, we therefore have $M\le20+3\cdot4=32$, as required.

Finally let $s=3$, so $t=2$ and $|X|=10$. For $r\le4$, it follows from~\eqref{eq:w25-deletion} and~\eqref{eq:w25-X23} that
\[
M\le23+3r\le\frac52(10+r).
\]
For $r\ge5$, we have, by~\eqref{eq:w25-budget},
\[
M\le27+r+p\le29+r\le\frac52(10+r).
\]
Every case supplies the required deletion set, completing the proof of the upper bound.

For $n\ge15$, Lan et~al.~\cite[Theorem~6]{LanShiSong2019} give planar graphs of maximum degree at most five with $\lfloor5n/2\rfloor$ edges. They are $W_{2,5}$-free, since this tree has a vertex of degree six. These graphs attain the upper bound and complete the proof of Theorem~\ref{thm:w25}.
\end{proof}

\subsection{The order of a \texorpdfstring{$W_{2,5}$}{W(2,5)}-free triangulation}

\begin{corollary}\label{prop:w25-triangulation}
A $W_{2,5}$-free plane triangulation has at most twelve vertices. The icosahedral graph attains this bound.
\end{corollary}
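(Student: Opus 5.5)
The plan is to combine the edge count of a triangulation with the upper bound of Theorem~\ref{thm:w25}, and then check the icosahedron for sharpness.

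For the upper bound, let $G$ be a $W_{2,5}$-free plane triangulation on $n$ vertices. If $n\le 2$ there is nothing to prove. If $n\ge3$, a simple plane triangulation has exactly $3n-6$ edges. Theorem~\ref{thm:w25} gives $e(G)\le\lfloor 5n/2\rfloor\le 5n/2$, so
\[
3n-6\le\frac{5n}{2},
\]
which rearranges to $n/2\le 6$, that is, $n\le12$.

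For sharpness, the icosahedral graph is a plane triangulation on twelve vertices with $30=3\cdot12-6$ edges, as drawn in Figure~\ref{fig:sharp-triangulations}(b). It is $5$-regular. The tree $W_{2,5}$ has a vertex of degree six, namely the endpoint $z$ of the spine carrying five leaves. A graph of maximum degree five cannot contain a vertex of degree six in a subgraph, so the icosahedron is $W_{2,5}$-free. This shows the bound twelve is attained.

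There is no real obstacle here. The only care needed is at the degenerate small orders, where $3n-6$ is not the edge count. These cases are trivially at most twelve, so I would dispatch them first.
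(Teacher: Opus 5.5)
Your proof is correct and matches the paper's argument: you combine $e(G)=3n-6$ with the bound $e(G)\le 5n/2$ from Theorem~\ref{thm:w25} to get $n\le 12$, and you note that the $5$-regular icosahedral graph cannot contain $W_{2,5}$, which has a vertex of degree six. Your separate treatment of $n\le 2$ is a harmless extra piece of care that the paper leaves implicit.
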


\begin{proof}
If $G$ is such a triangulation on $n$ vertices, then, by Theorem~\ref{thm:w25},
\[
3n-6=e(G)\le\frac52n,
\]
and therefore $n\le12$. The icosahedral graph in Figure~\ref{fig:sharp-triangulations}(b) is a twelve-vertex triangulation of maximum degree five, so it is $W_{2,5}$-free and attains the bound.
\end{proof}

\section{Dominating vertices and triangulations for \texorpdfstring{$W_{3,4}$}{W(3,4)}}
\label{sec:w34}

We first prove the characterization and the exact extremal result in
Theorem~\ref{thm:w34}.

\begin{proof}[Proof of Theorem~\ref{thm:w34}]
Put $F=G-v$ and $s=n-1$. The graph $F$ is outerplanar: deleting the
dominating vertex $v$ from a plane embedding exposes every remaining
vertex to the face formerly containing $v$.

Suppose that $d_F(u)\ge4$ for some vertex $u$. Choose four distinct
neighbors $w,a_1,a_2,a_3$ of $u$ in $F$. Use $uwv$ as a spine and
$a_1,a_2,a_3$ as the three leaves at $u$. The six vertices already used
leave at least $n-6\ge4$ other vertices, all adjacent to $v$, for the
four leaves at $v$. Thus $G$ contains $W_{3,4}$.

Conversely, suppose that $\Delta(F)\le3$. Every vertex of $G$ other
than $v$ has degree at most four. In a copy of $W_{3,4}$ the endpoint
supporting four leaves would therefore have to be $v$. The other
endpoint $u$ would need three leaves and a middle vertex, all distinct
from $v$. Since $u$ is also adjacent to $v$ in $G$, this would require
$d_G(u)\ge5$, a contradiction. This proves the characterization.

Every outerplanar graph on $s\ge3$ vertices has at least two vertices
of degree at most two: complete an outerplane embedding to a maximal
outerplanar graph on the same vertex set and use its two ears. Since
$\Delta(F)\le3$, we have
\[
2e(F)\le3s-2,
\qquad
e(G)=s+e(F)\le\left\lfloor\frac{5s-2}{2}\right\rfloor
=\left\lfloor\frac{5n-7}{2}\right\rfloor.
\]

For completeness, we construct an outerplanar graph $F_s$ with
$\Delta(F_s)\le3$ and $e(F_s)=\lfloor(3s-2)/2\rfloor$ for every
$s\ge8$. If $s=2k$, start with the ladder having vertices
\[
p_1,\ldots,p_{k-1},q_1,\ldots,q_{k-1}
\]
and edges $p_iq_i$ for $1\le i\le k-1$, together with
$p_ip_{i+1}$ and $q_iq_{i+1}$ for $1\le i<k-1$. Add two vertices
$a,b$, adjacent respectively to both ends of the left rung
$p_1q_1$ and of the right rung $p_{k-1}q_{k-1}$. They can be placed
outside the two ends of the ladder, giving an outerplane embedding
(Figure~\ref{fig:ear-ladder}). The original ladder has $2k-2$ vertices
and $3k-5$ edges. Consequently $F_{2k}$ has $2k$ vertices and $3k-1$
edges. Its two added vertices have degree two, and every other vertex
has degree three.

If $s=2k+1$, subdivide the outer boundary edge $ap_1$ of $F_{2k}$
once. This preserves outerplanarity and the degrees of its existing
vertices, and the new vertex has degree two. It yields
$e(F_{2k+1})=3k=\lfloor(3s-2)/2\rfloor$.

Finally, add a new vertex $v$ adjacent to every vertex of $F_s$.
This is planar: draw these new edges in the complementary disk on
the other side of the outer boundary cycle of $F_s$. The
characterization already proved shows that the resulting graph is
$W_{3,4}$-free. Its edge count attains the asserted bound for every
$n=s+1\ge10$.
\end{proof}

\begin{figure}[htbp]
\centering
\begin{minipage}[b]{.44\textwidth}
\centering
\begin{minipage}[c][4.3cm][c]{\linewidth}
\centering
\begin{tikzpicture}[scale=.98,vertex/.style={circle,fill=black,inner sep=1.8pt}]
\foreach \i in {1,2,3,4}{
  \node[vertex] (p\i) at (\i-1,1) {};
  \node[vertex] (q\i) at (\i-1,0) {};
  \draw (p\i)--(q\i);
}
\foreach \i/\j in {1/2,2/3,3/4}{
  \draw (p\i)--(p\j);
  \draw (q\i)--(q\j);
}
\node[vertex,label=left:$a$] (a) at (-1,.5) {};
\node[vertex,label=right:$b$] (b) at (4,.5) {};
\draw (a)--(p1) (a)--(q1) (b)--(p4) (b)--(q4);
\node[above=3pt] at (p1) {$p_1$};
\node[below=3pt] at (q1) {$q_1$};
\node[above=3pt] at (p4) {$p_4$};
\node[below=3pt] at (q4) {$q_4$};
\end{tikzpicture}
\end{minipage}
\par\smallskip
{\small (a) The outerplanar graph $F_{10}$.}
\end{minipage}\hfill
\begin{minipage}[b]{.54\textwidth}
\centering
\begin{minipage}[c][4.3cm][c]{\linewidth}
\centering
\begin{tikzpicture}[x=.67cm,y=.67cm,
  every node/.style={font=\small},
  vertex/.style={circle,fill=black,inner sep=1.55pt}]
  \coordinate (a) at (0,0);
  \coordinate (p1) at (1,0);
  \coordinate (p2) at (2,0);
  \coordinate (p3) at (3,0);
  \coordinate (p4) at (4,0);
  \coordinate (b) at (5,0);
  \coordinate (q4) at (6,0);
  \coordinate (q3) at (7,0);
  \coordinate (q2) at (8,0);
  \coordinate (q1) at (9,0);
  \coordinate (v) at (4.5,2.75);
  \draw[line width=.45pt]
    (a)--(p1)--(p2)--(p3)--(p4)--(b)--(q4)--(q3)--(q2)--(q1);
  \draw[line width=.45pt]
    (a) .. controls (0,-4) and (9,-4) .. (q1);
  \foreach \i in {1,...,4} {
    \pgfmathsetmacro{\depth}{.8*(5-\i)}
    \draw[line width=.45pt]
      (p\i) .. controls (\i,-\depth) and ({10-\i},-\depth) .. (q\i);
  }
  \foreach \x in {a,p1,p2,p3,p4,b,q4,q3,q2,q1} {
    \draw[line width=.45pt] (v)--(\x);
    \node[vertex] at (\x) {};
  }
  \node[vertex,label=above:$v$] at (v) {};
  \node[left=3pt] at (a) {$a$};
  \node[right=3pt] at (q1) {$q_1$};

\end{tikzpicture}
\end{minipage}
\par\smallskip
{\small (b) Adding the dominating vertex $v$.}
\end{minipage}
\caption{The construction for Theorem~\ref{thm:w34}, shown for $k=5$. The base graph $F_{10}$ in (a) has ten vertices and fourteen edges. In the plane drawing (b), adding $v$ adjacent to all ten base vertices produces a $W_{3,4}$-free extremal graph with $n=11$ and $e=24=\lfloor(5n-7)/2\rfloor$. The spokes lie above the horizontal path and the remaining five base edges below it. Subdividing the outer boundary edge $ap_1$ in (a) before adding $v$ illustrates the construction for an odd number of base vertices.}
\label{fig:ear-ladder}
\end{figure}

We turn to unrestricted triangulations. Unlike the dominating-vertex
result, the next proposition is only an upper bound on the order.

\begin{proposition}\label{prop:w34tri}
Every $W_{3,4}$-free plane triangulation has at most eleven vertices.
\end{proposition}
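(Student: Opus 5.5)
Suppose $G$ is a $W_{3,4}$-free plane triangulation with $n\ge12$; we aim for a contradiction. First we translate Lemma~\ref{lem:spine} into the two inequalities we shall use. If $x,z$ are nonadjacent with a common neighbor, $d(x)\ge5$ and $d(z)\ge4$, then freeness forces $d(x)+d(z)-\cdeg(x,z)\le7$. The same holds with the roles of $3$ and $4$ exchanged, so we use whichever orientation fits the degrees. If $xz$ is an edge with a common neighbor, the bound becomes $d(x)+d(z)-\cdeg(x,z)\le9$, valid when $d(x)\ge6$ and $d(z)\ge5$ (or symmetrically). Since $e(G)=3n-6$, the average degree exceeds $5$, so vertices of degree at least six exist. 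The argument splits according to $\Delta=\Delta(G)$.

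\emph{Large degree.} Let $d(x)\ge8$ and $z\in N_2(x)$ with $d(z)\ge3$. Taking three leaves at $z$ and four at $x$ needs only $d(x)-\cdeg(x,z)\ge\dots$. More directly, $d(x)+d(z)-\cdeg\ge d(x)\ge8>7$, with $x$ supplying the four leaves and $z$ the three (valid as $d(z)\ge4$). If $d(z)=3$, then $N(z)$ is a triangle, and we check directly whether $z$ can have a neighbor outside $N(x)$. The conclusion is that $N_2(x)$ is restricted to degree-three vertices with all neighbors in $N(x)$. Then $N_3(x)=\varnothing$, and every vertex of $N_2(x)$ has degree three with its neighborhood a facial triangle inside $N(x)$. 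Next we look at neighbors $u$ of $x$ via the adjacent inequality. Using $\cdeg(x,u)\ge2$ and $d(u)-\cdeg(x,u)\le 9-d(x)$, each $u$ has at most $8-d(x)\le0$ neighbors outside $N[x]$ once $d(u)\ge4$. Hence $N_2(x)=\varnothing$ and $x$ is dominating. Theorem~\ref{thm:w34} then gives $\Delta(G-x)\le3$, hence $e(G)\le\lfloor(5n-7)/2\rfloor<3n-6$ for $n\ge12$, a contradiction. The case $d(x)=7$ is handled the same way. Here the nonadjacent inequality forces $|N(z)\setminus N(x)|\le0$ for $z\in N_2(x)$ of degree $\ge4$, and a short argument covers degree-three $z$. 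As in Lemma~\ref{lem:w24tri}, $N_2(x)$ is then an independent set whose neighborhoods lie in $N(x)$. The bipartite bound $3r\le 2(r+8)-4-7$ gives $r\le5$, so $n\le13$. The cases $n=12,13$ are excluded by counting the neighbors $u\in N(x)$ outside $N[x]$ via the adjacent inequality, exactly as in the $d(x)=7$ step of Theorem~\ref{thm:w25}: each $u$ meets at most one vertex of $N_2(x)$, whence $3r\le7$.

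\emph{Main case $\Delta=6$.} Fix $x$ of degree six and set $S=N(x)$, $T=N_2(x)$. For $z\in T$ with $d(z)\ge5$ we get $|N(z)\setminus N(x)|\le1$. For $d(z)=4$ we get $|N(z)\setminus N(x)|\le1$ as well, using $x$ for four leaves and $z$ for three. For $d(z)=3$, the triangle structure keeps $z$ within distance two of $N(x)$, which limits $N_3(x)$. We then run the deficit count from the proof of Lemma~\ref{lem:w25-independent}, now against the triangulation identity $\sum(6-d(v))=12$. We bound $|T|$ by the bipartite inequality between $S$ and $\{x\}\cup T$, and $|N_3(x)|$ by the one-outside-neighbor property, aiming for $n=1+6+|T|+|N_3(x)|\le11$. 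The hardest step is controlling $N_3(x)$ and degree-three vertices in $T$. There I would first try the facial-third-vertex trick from Lemma~\ref{lem:w24tri}: an edge $zw$ with $w\in N_3(x)$ has two facial third vertices, both neighbors of $z$, and at most one of them can lie outside $N(x)$. Hence one lies in $N(x)$ and is adjacent to $w$, contradicting $w\in N_3(x)$. This forces $V=N[x]\cup T$ with $T$ sending few edges outward, and Euler counting should then give $n\le11$.
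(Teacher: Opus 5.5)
Your case split rests on the claim that the average degree exceeds $5$, but $2e(G)/n=6-12/n$ equals $5$ at $n=12$. So the case $\Delta(G)\le5$ is not empty. It forces $n=12$ with $G$ $5$-regular; the icosahedral graph is such a triangulation. Your proposal never shows that such a graph contains $W_{3,4}$. What is needed is a pair of nonadjacent degree-five vertices with exactly two common neighbors, since then $5+5-2=8>7$.

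The paper obtains this pair in two steps. First it excludes separating triangles: a side with $i$ interior vertices would have $2i$ interior edges, forcing $i\ge6$ on both sides and hence $n\ge15$. So every link is a chordless $5$-cycle. Next, for a vertex $x$, let $z_1,\dots,z_5$ be the third vertices of the faces on the rim edges around $x$. No two cyclically consecutive labels are equal, so each label occurs at most twice. Since the list has odd length, some label occurs exactly once, and that vertex has exactly two common neighbors with $x$.

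Second, your $\Delta=6$ case ends with the hope that Euler counting gives $n\le11$, and it does not. Put $r=|N_2(x)|$, and assume $N_3(x)=\varnothing$ and $e(G[N_2(x)])\le\lfloor r/2\rfloor$. The identity $\sum_v(6-d(v))=12$ gives $\sum_{z\in N_2(x)}d(z)\ge6r-12$. The bipartite bound between $N(x)$ and $\{x\}\cup N_2(x)$ gives at most $2r+4+2\lfloor r/2\rfloor$. Together these yield only $r\le5$, that is $n\le12$, and at $r=5$ both bounds equal $18$.

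In this tight configuration all seven vertices of $N[x]$ have degree six, and killing it needs a further idea. Since $G[N(x)]$ is outerplanar, some $u\in N(x)$ has at most two neighbors in $N(x)$. Then $\cdeg(x,u)\le2$ and $d(x)+d(u)-\cdeg(x,u)\ge10>9$, contradicting your adjacent inequality. The bound $e(G[N_2(x)])\le\lfloor r/2\rfloor$ also requires $|N(z)\setminus N(x)|\le1$ for degree-three $z$, which you leave vague; it follows from Lemma~\ref{lem:adjacent-three}.

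Finally, in the $\Delta\ge8$ case the adjacent inequality requires $d(u)\ge5$, not $d(u)\ge4$. A degree-four neighbor of $x$ can have a neighbor outside $N[x]$, so $N_2(x)=\varnothing$ does not yet follow. The leftover configuration is a degree-three $z$ whose three neighbors in $N(x)$ all have degree four, and it must be ruled out separately. The paper observes that $x$, $z$ and these three vertices then span a closed triangular bipyramid, leaving room for no other vertex.
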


\begin{proof}
Suppose that $G$ is a $W_{3,4}$-free plane triangulation on $n\ge12$
vertices. We repeatedly use Lemma~\ref{lem:spine}, the minimum degree
bound $\delta(G)\ge3$, and Lemma~\ref{lem:adjacent-three}.

\medskip\noindent
\textbf{Case 1: $\Delta(G)\ge8$.}
Choose $x$ with $d(x)\ge8$. Every vertex $z\in N_2(x)$ has degree
three. Indeed, if $d(z)\ge4$, choose a common neighbor $y$ of $x,z$
and three neighbors of $z$ other than $y$. These are valid leaves at
$z$, and deleting them and $y$ leaves at least $d(x)-4\ge4$
neighbors of $x$ available as leaves at $x$.

For such a degree-three vertex $z$, its three neighbors form a
triangle. If it had a neighbor $w\notin N(x)$, a common neighbor
$y$ of $x,z$ would also be adjacent to $w$. Thus $w\in N_2(x)$,
so $d(w)=3$, contradicting Lemma~\ref{lem:adjacent-three}. Therefore
$N(z)\subseteq N(x)$ for every $z\in N_2(x)$, and there are no
vertices at distance three or more from $x$.

Suppose that a vertex $z$ exists outside $N[x]$, and write
$N(z)=\{a,b,c\}$. The vertices $a,b,c$ form a triangle and all
belong to $N(x)$, so each has degree at least four. If, for example,
$d(a)\ge5$, then
\[
\cdeg(x,a)\le d(a)-2,
\]
because both $x$ and $z$ belong to $N(a)\setminus N(x)$.
The adjacent endpoints $a,x$ have a common neighbor, and their
available leaf-neighborhood union on the corresponding spine has
size at least
\[
d(x)+d(a)-3-\cdeg(x,a)\ge d(x)-1\ge7.
\]
Their degrees satisfy the requirements of Lemma~\ref{lem:spine},
giving $W_{3,4}$. Hence $a,b,c$ all have degree exactly four.
The five vertices $x,z,a,b,c$ induce the triangular bipyramid,
and the degrees of $a,b,c$ are exhausted by these five vertices.
Each of the six faces of this bipyramid contains an edge with both
ends among $a,b,c$. No such face can contain a further vertex of
the triangulation: the triangular face incident with that edge
inside it would introduce an additional neighbor of an exhausted
vertex. Thus $G$ would have only five vertices, a contradiction.

It follows that $x$ is dominating. Theorem~\ref{thm:w34} implies
that every other vertex has degree at most four. By Euler's formula,
we then have
\[
6n-12=\sum_{u\in V(G)}d(u)\le5(n-1),
\]
forcing $n\le7$, again a contradiction.

\medskip\noindent
\textbf{Case 2: $\Delta(G)=7$.}
Choose $x$ with $d(x)=7$. If $z\in N_2(x)$ has degree at least
four, then $N(z)\subseteq N(x)$. Otherwise, choose a common
neighbor $y$ of $x,z$, one neighbor of $z$ outside $N(x)$, and
two other neighbors of $z$ distinct from $y$. Using the last three
vertices as leaves at $z$ excludes at most three neighbors of $x$,
including $y$, and leaves four available leaves at $x$.

The same containment holds when $d(z)=3$. If $z$ had a neighbor
$w\notin N(x)$, the triangular neighborhood of $z$ would imply
$w\in N_2(x)$. If $d(w)=3$, this contradicts
Lemma~\ref{lem:adjacent-three}. If $d(w)\ge4$, the containment
just proved for $w$ is contradicted by its neighbor $z\notin N(x)$.
Consequently there are no vertices beyond distance two from $x$,
and $S=V(G)\setminus N[x]$ is independent.

Every $u\in N(x)$ has at most one neighbor in $S$. For $d(u)\ge5$,
apply Lemma~\ref{lem:spine} to the adjacent endpoints $u,x$ and
any common neighbor. The degree requirements hold, so freeness
forces
\[
d(u)-\cdeg(x,u)\le2.
\]
As $x\in N(u)\setminus N(x)$, this permits at most one neighbor
of $u$ in $S$. If $d(u)=4$, the two triangular faces on $xu$
provide two distinct neighbors of $u$ in $N(x)$, again leaving at
most one in $S$; if $d(u)=3$, they leave none. It follows that
$3|S|\le e(S,N(x))\le7$, and hence $n=8+|S|\le10$, a contradiction.

\medskip\noindent
\textbf{Case 3: $\Delta(G)=6$.}
Choose $x$ with $d(x)=6$. We claim that
\begin{equation}\label{eq:w34-distance-two}
|N(z)\setminus N(x)|\le1
\qquad\text{for every }z\in N_2(x).
\end{equation}
First let $d(z)\ge4$. If $z$ had two neighbors outside $N(x)$,
choose them and one further neighbor other than a fixed common
neighbor $y$ of $x,z$ as three leaves at $z$. Together with $y$,
these leaves exclude at most two neighbors of $x$, leaving four
leaves at $x$. This proves~\eqref{eq:w34-distance-two} in this
case. If $d(z)=3$ and its neighbors included two vertices outside
$N(x)$, the triangular neighborhood of $z$ would make both
vertices belong to $N_2(x)$. Neither can have degree three by
Lemma~\ref{lem:adjacent-three}. Each would therefore contradict
the inequality already proved, since its neighbors outside
$N(x)$ include $z$ and the other of these two vertices.

A vertex $z\in N_2(x)$ cannot be adjacent to a vertex $w$ at
distance three from $x$. Indeed, a triangular face on $zw$ has a
third vertex $t\in N(z)$, and~\eqref{eq:w34-distance-two} forces
$t\in N(x)$. But then $w$ is at distance two from $x$. Hence all
vertices of $S=V(G)\setminus N[x]$ have distance two from $x$,
and~\eqref{eq:w34-distance-two} implies $\Delta(G[S])\le1$.
Write $r=|S|=n-7$.

Using Euler's formula and the maximum degree on $N[x]$, we obtain
\begin{equation}\label{eq:w34-outside-lower}
\sum_{z\in S}d(z)\ge6n-12-42=6r-12.
\end{equation}
Retain only the edges between $N(x)$ and $\{x\}\cup S$. The
resulting bipartite planar graph has $r+7$ vertices, so
\[
6+\sum_{z\in S}d(z)-2e(S)\le2(r+7)-4.
\]
Since $e(S)\le\lfloor r/2\rfloor$, this yields
\begin{equation}\label{eq:w34-outside-upper}
\sum_{z\in S}d(z)\le2r+4+2\lfloor r/2\rfloor\le3r+4.
\end{equation}
The bounds~\eqref{eq:w34-outside-lower} and
\eqref{eq:w34-outside-upper} are incompatible for $r\ge6$.
If $n=12$, then $r=5$ and the lower bound and the first upper
bound both equal eighteen. Equality in the lower bound forces
every vertex of $N[x]$ to have degree six. The outerplanar graph
$G[N(x)]$ has a vertex $u$ of degree at most two within that
graph. Thus $\cdeg(x,u)\le2$, while $d(x)=d(u)=6$. The endpoints
$x,u$ are adjacent and have a common neighbor; on a spine
through it their available leaf-neighborhood union has size at
least $6+6-3-2=7$. Lemma~\ref{lem:spine} gives $W_{3,4}$,
again a contradiction.

\medskip\noindent
\textbf{Case 4: $\Delta(G)\le5$.}
By Euler's formula, $6n-12\le5n$, so the assumption $n\ge12$
forces $n=12$ and $G$ to be 5-regular. We show directly that
this graph contains $W_{3,4}$.

First, $G$ has no separating triangle. To see this, let one side
of such a triangle contain $i$ vertices, let $m$ count the edges
with both ends among those vertices, and let $b$ count the edges
from them to the boundary triangle. Counting edges in the triangulated disk
and summing the degrees of the interior vertices, respectively, we obtain
\[
m+b=3i,\qquad 2m+b=5i.
\]
Hence $m=2i$. This is impossible for $i=1,2$; for $i\ge3$,
we have $2i=m\le3i-6$ by planarity, so $i\ge6$. Applying the same
argument on the other side would force at least $6+6+3=15$
vertices, contrary to $n=12$.

Fix a vertex $x$, and list its neighbors in cyclic order as
$u_1,\ldots,u_5$, with indices read modulo five. They induce a
chordless cycle: a chord would form a separating triangle with
$x$. Every $u_i$ has precisely two neighbors outside $N[x]$,
its three other neighbors being $x,u_{i-1},u_{i+1}$. Let $z_i$
be the third vertex of the triangular face on $u_iu_{i+1}$
opposite $x$. As the neighbor cycle is chordless, $z_i$ lies
outside $N[x]$.

Consecutive labels $z_{i-1},z_i$ are distinct. Otherwise, the
two faces on the corresponding rim edges together with the two
faces containing $x$ would close the rotation at $u_i$ with only
four neighbors, contradicting $d(u_i)=5$. Therefore
$z_{i-1},z_i$ are precisely the two neighbors of $u_i$ outside
$N[x]$.

A label can occur at most twice in the cyclic list
$z_1,\ldots,z_5$, since consecutive occurrences are excluded.
Because the list has odd length, some label $z$ occurs exactly
once. It follows that $x,z$ have exactly two common neighbors,
namely the ends of the uniquely labeled rim edge: any additional
neighbor $u_j$ would force $z=z_{j-1}$ or $z=z_j$ and give another
occurrence. The vertices $x,z$ are nonadjacent and both have
degree five. On a spine through either common neighbor, their
available leaf sets each have size four and their union has size
$5+5-1-2=7$. Lemma~\ref{lem:spine} yields $W_{3,4}$, the final
contradiction.
\end{proof}

\begin{remark}
Every triangulation on at most nine vertices is automatically
$W_{3,4}$-free, since the forbidden tree has ten vertices.
Proposition~\ref{prop:w34tri} does not settle whether there exists
a $W_{3,4}$-free triangulation on ten or eleven vertices.
\end{remark}

\section{Concluding questions}

Theorem~\ref{thm:w24} determines the planar Tur\'an density of $W_{2,4}$ and its exact extremal number when the order is divisible by eight. Determining $\ex_{\PP}(n,W_{2,4})$ in the remaining residue classes is still a finite-order problem. A related question is to characterize all graphs attaining the bound at multiples of eight.

Theorem~\ref{thm:w25} determines $\ex_{\PP}(n,W_{2,5})$ for every $n\ge15$. It remains natural to characterize the extremal graphs. The known constructions have maximum degree five; understanding whether extremal graphs with vertices of larger degree occur at arbitrarily large orders would provide a more structural description of the equality case.

For $W_{3,4}$, the extremal function is determined here only for graphs with a dominating vertex. Its unrestricted asymptotic coefficient, and the existence of $W_{3,4}$-free triangulations on ten or eleven vertices, are not settled by these arguments. Extending the degree-deficit argument to quasi-double stars with more leaves is a further possible direction.

\end{document}